\documentclass[10pt, reqno]{amsart} 
\usepackage[english]{babel}
\usepackage{graphicx,epsfig}
\usepackage{bbm}
\usepackage[noadjust]{cite}
\usepackage{amsmath,amssymb,latexsym, amsfonts, amscd, amsthm}
\usepackage{enumerate}
\usepackage{mathrsfs}
\usepackage[T1]{fontenc}
\usepackage{lmodern}

\usepackage{mathtools}
\usepackage[margin=1.2in]{geometry}

\usepackage[dvipsnames]{xcolor}

\usepackage{xspace} 

\usepackage{cellspace}
\cellspacetoplimit 0.25cm
\cellspacebottomlimit 0.25cm

\usepackage{tikz-cd}
\usetikzlibrary{cd}

\usepackage[mathscr]{euscript} 

\usepackage{caption}

\usepackage{relsize}
\usepackage[bbgreekl]{mathbbol}
\DeclareSymbolFontAlphabet{\mathbb}{AMSb} 
\DeclareSymbolFontAlphabet{\mathbbl}{bbold}
\newcommand{\Prism}{{\mathlarger{\mathbbl{\Delta}}}}

\makeindex \setcounter{tocdepth}{1}

\definecolor{ggreen}{rgb}{0.07, 0.55, 0.0}
\definecolor{rred}{rgb}{0.86, 0.18, 0.10}
\definecolor{bblue}{RGB}{15,70,150}

\usepackage[linktocpage=true]{hyperref}
\hypersetup{pdftoolbar=true, pdftitle={WachRamification}, pdfauthor={Pavel Coupek}, pdffitwindow=true, colorlinks=true, citecolor=ggreen, filecolor=ggreen, linkcolor=rred, urlcolor=ggreen, hypertexnames=false}

\usepackage{tablefootnote}

\newtheoremstyle{vetalike}
{8pt}
{6pt}
{\slshape}
{}
{\bfseries}
{.}
{1em}
{}

\newtheoremstyle{poznslike}
{8pt}
{6pt}
{}
{}
{\bfseries}
{.}
{\newline}
{}

\newtheoremstyle{otherlike}
{8pt}
{6pt}
{}
{}
{\bfseries}
{.}
{1em}
{}

\newtheoremstyle{deflike}
{8pt}
{6pt}
{}
{}
{\bfseries}
{.}
{1em}
{}

\theoremstyle{vetalike}
\newtheorem{thm}{Theorem}[section]
\newtheorem{prop}[thm]{Proposition}
\newtheorem{lem}[thm]{Lemma}
\newtheorem{cor}[thm]{Corollary}

\theoremstyle{poznslike}

\theoremstyle{deflike}
\newtheorem{deff}[thm]{Definition}
\newtheorem{nott}[thm]{Notation}

\newtheorem{rem}[thm]{Remark}

\theoremstyle{otherlike}

\newtheorem{pr}[thm]{Example}

\newcommand{\ainf}{A_{\mathrm{inf}}}

\newcommand{\BK}{\mathrm{BK}}
\newcommand{\et}{\text{\'{e}t}}

\newcommand{\Cr}[1]{\textnormal{($\mathrm{Cr}_{#1}$)}\xspace}

\newcommand{\oh}{\mathcal{O}}

\renewcommand{\H}{\mathrm{H}}
\newcommand{\R}{\mathsf{R}}
\renewcommand{\L}{\mathsf{L}}

\newcommand{\CC}{\mathbb{C}}
\newcommand{\QQ}{\mathbb{Q}}

\newcommand{\ZZ}{\mathbb{Z}}

\newcommand{\RR}{\mathbb{R}}
\renewcommand{\AA}{\mathbb{A}}
\newcommand{\NN}{\mathbb{N}}
\renewcommand{\aa}{\mathfrak{a}}

\newcommand{\Mat}{\mathrm{Mat}}

\newcommand{\Ker}{\mathrm{Ker}\,}

\newcommand{\gal}{\mathrm{Gal}}

\newcommand{\Es}{\mathfrak{S}}

\renewcommand{\theequation}{\arabic{section}.\arabic{equation}}

\subjclass[2020]{primary: 11F80, 14F30, secondary: 11F85, 14G20, 11S15}

\begin{document}

\author{Pavel \v{C}oupek}
\address[Original Affiliaton]{Department of Mathematics, Michigan State University, Wells Hall, 619 Red Cedar Road, East Lansing, MI 48824, USA}
\address[Current Address]{Department of Mathematics, University of Virginia, Kerchof Hall, 141 Cabell Dr., Charlottesville, VA 22903, USA}
\email{kym4jc@virginia.edu}
\date{}
\title{Ramification bounds via Wach modules and $q$--crystalline cohomology}
\begin{abstract}
\noindent Let $K$ be an absolutely unramified $p$--adic field. We establish a ramification bound, depending only on the given prime $p$ and an integer $i$, for mod $p$ Galois representations associated with Wach modules of height at most $i$. Using an instance of $q$--crystalline cohomology (in its prismatic form), we thus obtain improved bounds on the ramification of $\H^{i}_{\et}(\mathscr{X}_{\mathbb{C}_K}, \mathbb{Z}/p\mathbb{Z})$ for a smooth proper $p$--adic formal scheme  $\mathscr{X}$ over $\oh_K$, for arbitrarily large degree $i$.
\end{abstract}
\maketitle

\tableofcontents

\section{Introduction}

Let $p>0$ be a prime, let $K$ be a $p$--adic field and denote by $G_K$ the absolute Galois group of $K$. The aim of this note is to study properties of mod $p$ $G_K$--representations $T$ that are crystalline in a suitable sense. While the optimal definiton of ``crystalline'' in this context is open to some discussion (see e.g \cite[p. 509]{BhattScholzeCrystals}), the intended meaning  for our purposes is one of the following two variants (relative to a fixed choice of an integer $i\geq 0$):
\begin{enumerate}[(a)]
\item{(abstract) $T$ is a $p$--torsion subquotient of a $G_K$--stable lattice in a crystalline $\QQ_p$--representation whose Hodge--Tate weights are contained in the interval $[-i, 0]$.}\label{TorsionCrysAbstract}
\item{(geometric) $T$ is the $i$--th \'{e}tale cohomology with $\ZZ/p\ZZ$--coefficients of the geometric generic fiber of a proper smooth $p$--adic formal scheme over $\oh_K$ (or a subquotient thereof).}\label{TorsionCrysGeometric}
\end{enumerate}

The two notions are related, but distinct. Notably, the geometric case is \textit{not} subsumed by the abstract case in any obvious way: the natural idea to express $\H^i_{\et}(X_{\CC_K}, \ZZ/p\ZZ)$ as the quotient of $\H^i_{\et}(X_{\CC_K}, \ZZ_p)$ is obstructed by the presence of torsion in $\H^i_{\et}(X_{\CC_K}, \ZZ_p)$ and $\H^{i+1}_{\et}(X_{\CC_K}, \ZZ_p)$ \cite{EmertonGee1}.

More concretely, we are interested in ramification of such representations. Let $G_K^v$ denote the upper--index higher ramification subgroups of $G_K=\gal(\overline{K}/K)$. Our main result is the following:

\begin{thm}[Theorem~\ref{thm:conclusion}]\label{thm:MainThm}
Assume that $K$ is absolutely unramified. Let $T$ be a mod $p$ crystalline representation in the sense of (\ref{TorsionCrysAbstract}) or (\ref{TorsionCrysGeometric}) above, relative to the integer $i$. Then $G_K^{v}$ acts trivially on $T$ when
$$v>\alpha+\max\left\{\,0,\; \frac{ip}{p^\alpha(p-1)}-\frac{1}{p-1}\,\right\},$$
where $\alpha$ is the least integer satisfying $p^{\alpha}>ip/(p-1)$.   
\end{thm}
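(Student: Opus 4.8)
\emph{Proof strategy.} The plan is to reduce both notions of ``crystalline'' to a single statement about torsion Wach modules, and then to run a Fontaine--style ramification estimate that is sharpened by iterating Frobenius. In case (\ref{TorsionCrysAbstract}), the torsion theory of Wach modules (Wach, Berger) attaches to $T$ a torsion Wach module $\mathfrak{N}$ over $A_K^+ = W(k)[[\pi]]$ of height at most $i$, with $T$ recovered from the associated \'etale $(\varphi,\Gamma_K)$--module. In case (\ref{TorsionCrysGeometric}) this is where $q$--crystalline cohomology enters: for $\mathscr{X}$ smooth proper over the \emph{absolutely unramified} $\oh_K$, the degree--$i$ part of the $q$--crystalline complex, in its prismatic incarnation over the $q$--crystalline prism $(\ZZ_p[[q-1]],\, [p]_q)$, becomes after killing $p$ a torsion Wach module of height $\le i$ whose associated Galois representation is $\H^i_{\et}(\mathscr{X}_{\CC_K},\ZZ/p\ZZ)$ (subquotients pose no extra problem). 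It is precisely the absolute unramifiedness that lets $\ZZ_p[[q-1]]$ take the role of $A_K^+$ and yield a genuine Wach module, with $\Gamma_K$ acting trivially modulo $q-1$, rather than merely a Breuil--Kisin module. So it suffices to bound the ramification of $T$ attached to a torsion Wach module $\mathfrak{N}$ of height $\le i$.

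For the estimate itself I would base change $\mathfrak N$ along $A_K^+ \hookrightarrow \ainf$ and reduce mod $p$, obtaining an $(\ainf/p)$--module $\mathfrak M$ with a semilinear $G_K$--action --- through $\Gamma_K$ on $\mathfrak N$ and the tautological action on $\ainf/p = \oh_{\CC_K^\flat}$ --- and a Frobenius whose cokernel $\mathfrak M/(1\otimes\varphi)(\varphi^\ast\mathfrak M)$ is killed by $[p]_q^i$; here $T$ (or its dual) is the $\varphi$--invariant sublattice and generates $\mathfrak M$ over $\oh_{\CC_K^\flat}$, so $g\in G_K$ acts trivially on $T$ as soon as it acts trivially on $\mathfrak M$. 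The two quantitative inputs are: (i) the Wach property --- $\Gamma_K$ trivial on $\mathfrak N/\pi$ --- upgraded via $\varphi$--equivariance to $(\gamma-1)\varphi^{l}(\mathfrak N)\subseteq \pi^{p^{l}}\varphi^{l}(\mathfrak N)$ for $\gamma$ sufficiently deep in $\Gamma_K$, together with the height bound $\pi^{\,i(p^{l}-1)}\mathfrak N\subseteq A_K^+\varphi^{l}(\mathfrak N)$; and (ii) Fontaine's estimate for the ramification filtration of the perfectoid field $\CC_K$: for $g\in G_K^{v}$ the displacement $v^{\flat}(g(x)-x)$ on $\oh_{\CC_K^\flat}$ grows like a constant multiple of $p^{v}$, because for absolutely unramified $K$ the upper breaks of $K(\mu_{p^{\infty}})/K$ lie at the integers.

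The heart of the argument is then the iteration. Feeding the estimate (ii) for $g\in G_K^{v}$ into $g\varphi=\varphi g$, each application of $\varphi$ multiplies the available valuation gain by $p$, whereas transferring an estimate between $\mathfrak M$ and $\varphi^{l}(\mathfrak M)$ costs at most the ``height defect'' $v^{\flat}(\pi^{i(p^{l}-1)})\sim ip^{l}/(p-1)$. One iterates exactly $\alpha$ times, where $\alpha$ is the least integer with $p^{\alpha}>ip/(p-1)$, i.e. the first moment the cumulative gain $p^{\alpha}$ overtakes the defect; the leftover discrepancy is precisely $\max\{0,\; ip/(p^{\alpha}(p-1))-1/(p-1)\}$, so once $v$ exceeds $\alpha$ plus this discrepancy the combined estimate forces $(g-1)\mathfrak M=0$, whence $G_K^{v}$ acts trivially on $T$.

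I expect two genuinely delicate points. The first is the geometric reduction above --- establishing that $q$--crystalline cohomology in degree $i$ is finite of ``height $i$'' over $\ZZ_p[[q-1]]$ with $\Gamma_K$ trivial modulo $q-1$, for arbitrarily large $i$ and without leaning on a full crystalline comparison. The second, and the real technical core, is making the iteration numerically sharp: one must carry the $H_K=G_{K(\mu_{p^{\infty}})}$--part of the Galois action (not just $\Gamma_K$) through the perfect period ring and control exactly how the height--$i$ denominators interact with the ramification filtration, so that the final bound comes out as the logarithmic $\alpha+\max\{0,\ldots\}$ rather than a quantity linear in $i$.
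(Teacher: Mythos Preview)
Your reduction step is right and matches the paper: both the abstract and the geometric cases feed into a $p$--torsion Wach module $M$ of height $\le i$, and the iterated Wach estimate $(\gamma-1)M\subseteq (q-1)^{p^s}M$ for $\gamma\in\Gamma_s$ is exactly what the paper proves and uses (Lemma~\ref{lem:IterateCrys}, Proposition~\ref{prop:GsFormal}). Where your outline departs from the paper --- and where it breaks --- is in step (ii) and the endgame.

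The assertion ``for $g\in G_K^{v}$ the displacement $v^{\flat}(g(x)-x)$ on $\oh_{\CC_K^\flat}$ grows like a constant multiple of $p^{v}$'' is not a theorem. Upper ramification groups are defined via finite quotients; there is no uniform lower bound on $v^{\flat}(g(x)-x)$ over \emph{all} $x\in\oh_{\CC_K^\flat}$ coming from $g\in G_K^{v}$. (Such an estimate holds for the specific element $\varepsilon$, because that is governed by the cyclotomic character, but not for arbitrary $x$.) Relatedly, your target ``$(g-1)\mathfrak{M}=0$'' is impossible for any $g\neq 1$: the module $\mathfrak{M}=M\otimes_{\AA}\oh_{\CC_K^\flat}$ contains the scalars, on which every nontrivial $g$ acts nontrivially. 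What you actually need is $(g-1)T=0$, and since $T$ sits inside $\mathfrak{M}$ via coefficients in $\oh_{\CC_K^\flat}$ that you cannot control, the $H_K$--part you flag as ``delicate'' is not a technicality but the whole problem.

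The paper resolves this not by estimating displacements on the perfectoid ring, but by descending to \emph{finite} coefficients. The Wach iteration is used only to show that on the approximation $J_b=\mathrm{Hom}_{\AA,\varphi}(M,\oh_{\CC_K^\flat}/\aa^{>b})$ the $G_s$--action is ``formal'' once $p^s>i$; this lets one replace $\oh_{\CC_K^\flat}/\aa^{>b}$ by $\oh_E/\aa_E^{>b/p^s}$ for finite $E/K_{p^{s+1}}$ and recover $T$ as fixed points (Propositions~\ref{prop:Approximation2}, \ref{prop:FixedPoints}). The link to the ramification filtration is then Fontaine's property $(P_m^{L_{s+1}/K_{p^{s+1}}})$ for $m=a/p^s$ (Proposition~\ref{prop:ProofOfPm}), combined with the known Herbrand function of $K_{p^{s+1}}/K$. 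That $(P_m)$ step --- translating ``there is an $\oh_{K_{p^{s+1}}}$--algebra map to $\oh_E/\aa_E^{>m}$'' into a bound on $\mu_{L_{s+1}/K_{p^{s+1}}}$ --- is the genuine bridge between the module theory and the ramification filtration, and it has no counterpart in your outline.
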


Results of this type have a long history, going back to Fontaine's paper \cite{Fontaine} on the non--existence of Abelian varietes over $\QQ$ with good reduction everywhere. To a large extent, Fontaine's proof is based on a similar type of ramification bound for finite flat group schemes of order $p^n$ (over $\oh_K$ for general $K$). Subsequently, Fontaine \cite{Fontaine2} and Abrashkin \cite{Abrashkin} provided another version of ramification bounds for crystalline mod $p$ (mod $p^n$ in \cite{Abrashkin}) representations in the sense of (\ref{TorsionCrysAbstract}) above, but only when $K$ is absolutely unramified and the bounding integer $i$ satisfies $i<p-1$. The reason for these restrictions is the use of Fontaine--Laffaille theory \cite{FontaineLaffaille}, which works well only in this setting.

Of the further developments \cite{BreuilMessing, Hattori, Abrashkin2, CarusoLiu, Caruso}, let us explicitly list the extensions to the ``abstract semistable'' case, that is, the analogue of (\ref{TorsionCrysAbstract}) for semistable representations. Breuil \cite{BreuilLetter} (see also \cite{BreuilMessing}) proved such bounds assuming $ie<p-1$ where $e$ is the ramification index of $K/\QQ_p$, and under additional assumptions (Griffiths transversality). Hattori's work \cite{Hattori} then removed these extra assumptions and improved the applicable range to $i<p-1$ (with $e$ arbitrary, also in mod $p^n$ version). Finally, Caruso and Liu \cite{CarusoLiu} obtained a bound for abstract $p^n$--torsion semistable representations with $e$ and $i$ arbitrary, using the theory of $(\varphi, \widehat{G})$--modules \cite{LiuPhiG}, an enhancement of Breuil--Kisin modules \cite{Kisin} attached to lattices in semistable representations. 

It is worth noting that the above results also apply to the geometric  setting (\ref{TorsionCrysGeometric}), resp. its semistable analogue, using various comparison theorems \cite{FontaineMessing, CarusoLogCris, LiLiu}; however, these typically apply only when $ie<p-1$. This was the motivation for the author's previous work \cite{Ram1}, where a ramification bound was established for mod $p$ geometric crystalline representations with $e$ and $i$ arbitrary. 

While this has been achieved, the obtained result is not optimal: namely, in the setting $ie<p-1$ where the bounds of \cite{Hattori, CarusoLiu} apply to \'{e}tale cohomology of varieties with semistable reduction, the bound of \cite{Ram1} essentially agrees with these semistable bounds. A related question is raised in \cite{CarusoLiu} where the authors wonder whether there exists a general ramification bound for subquotients of a crystalline representation. They point out that they do not know any such genuinely crystalline bound beyond the results \cite{Fontaine2, Abrashkin} in the Fontaine--Laffaille case.

It is precisely these questions that motivate the present work: while we restrict to the absolutely unramified case ($e=1$), Theorem~\ref{thm:MainThm} applies for arbitrarily large $i$, in both the abstract and the geometric setting, hence goes beyond the scope of Fontaine--Laffaille theory. Moreover, specializing the results of \cite{Hattori, CarusoLiu, Ram1} to $e=1$, the present bound is in fact stronger (see Remark~\ref{rem:CrystallineVsSemistable}).  

Just like in \cite{Ram1}, the key input for the (geometric part of) the proof comes from prismatic cohomology \cite{BMS1, BMS2, BhattScholze}. Let us contrast the two approaches. In \cite{Ram1}, to the geometric mod $p$ crystalline representation a pair $(M_{\BK}, M_{\inf})$ was attached, consisting of the mod $p$ versions of Breuil--Kisin and $\ainf$--cohomology (the latter carrying Galois action). The key step in implementing a variant of the strategy \cite{CarusoLiu} was then to prove a series of conditions $\Cr{s}$, $s \geq 0$, reflecting the crystalline origin of these modules. These conditions are of the form ``$(g-1)M_{\BK} \subseteq I_s M_{\inf}$'' for $g$ coming from Galois groups of members of the Kummer tower $\{K(\pi^{1/p^s})\}_s$ associated with the Breuil--Kisin prism $\Es$ and its embedding to the Fontaine prism $\ainf$. 

In contrast, the present paper uses the theory of Wach modules \cite{Wach, Wach2, ColmezWachM, BergerWachM} rather than Breuil--Kisin modules. On the cohomological side, Breuil--Kisin cohomology is replaced by an instance of $q$--crystalline cohomology \cite[\S 16]{BhattScholze}, which we call \textit{Wach cohomology}. Such a replacement is natural: unlike Breul--Kisin modules, Wach modules relate only to crystalline representations. 

Roughly speaking, the shift from Breuil--Kisin to Wach modules amounts to replacing the prism $\Es$ by the Wach prism $\AA \subseteq \ainf$, which has many consequences. Firstly, the Kummer tower $\{K(\pi^{1/p^s})\}_s$ is replaced by the better--behaved cyclotomic tower $\{K(\mu_{p^s})\}_s$ in our argument. This is what in the end allows us to obtain a stronger ramification bound. Secondly, unlike $\Es$, the subring $\AA$ of $\ainf$ is stable under Galois action, and ultimately, so are Wach modules. This allows us to replace the use of conditions $\Cr{s}$ by the single condition analogous to $\Cr{0}$, which is in fact part of the definition of a Wach module. On the other hand, the theory of Wach modules works well only for $K$ absolutely unramified, which is why we consider only this case.

The outline of the paper is as follows. In Section~\ref{sec:prelim} we introduce  the most relevant background and notation on prisms that we use, as well as notation connected with ramification groups and ramification bounds. The notion of Wach modules, or rather a version of it suited for our purposes, is recalled in Section~\ref{sec:WachCohomology}. Here we also define (mod $p$) Wach cohomology groups and relate them to \'{e}tale cohomology. Finally, in Section~\ref{sec:RamBound}, we carry out the proof of Theorem~\ref{thm:MainThm}. We end the paper by an example showing that our bound in general \textit{does not} apply to semistable representations.

\vspace{1em}
\noindent \textbf{Acknowledgement.} As will become apparent, the present note is greatly inspired by the work \cite{CarusoLiu} of Xavier Caruso and Tong Liu. I am in particular very grateful to Tong Liu for his input through various discussions on this topic, and overall for his encouragement in carrying out this work. I also thank the anonymous referee for pointing out an error in an earlier version of the article, and for his other helpful suggestions that greatly improved clarity and exposition of the article. During the preparation of the paper, the author was partially supported by the NSF grant DMS-2337830 (PI: Preston Wake).

\section{Preliminaries}\label{sec:prelim}
\subsection{Prisms $\AA$ and $\ainf$}
We fix a prime $p$ throughout. Let $k$ be a perfect field of characteristic $p$, and let $K=W(k)[1/p]$ be the associated absolutely unramified $p$--adic field. Let us denote by $\CC_K$ the completed algebraic closure of $K$, and by $\oh_{\CC_K}$ its ring of integers. We let $G_K$ denote the absolute Galois goup of $K$.

For a detailed discussion of prisms and prismatic cohomology, we refer the reader to \cite{BhattScholze}. Here we only recall that a $p$-torsion free, bounded, oriented prism $(A, I)$ is given by a $p$-torsion free ring $A$, a principal ideal $(d)=I \subseteq A$ where $d \in A$ is a non-zero divisor, and a Frobenius lift $\varphi: A \rightarrow A$ (i.e., an endomorphism reducing to the absolute Frobenius modulo $p$) such that:
\begin{enumerate}
\item $A$ is $(p, d)$-adically complete.
\item The element $d \in A$ is distinguished, i.e., $(\varphi(d)-d^p)/p$ is a unit in $A$. Equivalently, we have $p \in (d, \varphi(d)).$
\item The ring $A/d$ has bounded $p^{\infty}$-torsion, i.e., $A/d[p^{\infty}]=A/d[p^{N}]$ for some $N \in \NN$.
\end{enumerate}

The examples of such prisms relevant to the present article are the following:

\begin{enumerate}
\item The central prism of interest is the prism $(\AA, I)$ where $\AA=W(k)[[q-1]]$ (with $q$ a formal variable, unit in $\AA$), and $I$ is the principal ideal generated by $$[p]_q=\frac{q^p-1}{q-1}=1+q+q^2+\dots +q^{p-1}.$$ 
 The Frobenius lift $\varphi$ on $\AA$ is given by the Witt vector Frobenius on $W(k)$ and by $\varphi(q)=q^p$.

\noindent When $W(k)=\ZZ_p$, this is the $q$--crystalline prism from \cite[Example~1.3 (4)]{BhattScholze}. To stress the connection with the theory of Wach modules, and to avoid the conflation with the case over $\ZZ_p$, we refer to $(\AA, I)$ as the \textit{Wach prism associated with $W(k)$}.

\item Another key prism is the Fontaine prism $(\ainf, \mathrm{Ker}\,  \theta)$ (an instance of a perfect prism \cite[Example~1.3 (2)]{BhattScholze}). Here $\ainf=W(\oh_{\CC_K^{\flat}})$ where $\oh_{\CC_K^{\flat}}$ is the inverse limit perfection of $\oh_{\CC_K}/p$. The map $\theta: \ainf \rightarrow \oh_{\CC_K}$ is the Fontaine's map, determined by sending the Teichm\"{u}ller lift $[x]$ of any element $x=(x_0 \,\mathrm{mod}\, p, x_0^{1/p} \,\mathrm{mod}\, p, \dots ) \in \oh_{\CC_K^{\flat}}$ to $x_0$.
\end{enumerate}

Let us fix a compatible system $(\zeta_{p^s})_s$ of primitive $p^s$--th roots of unity, which determines the element $$\varepsilon=(1, \zeta_p, \zeta_{p^2}, \dots) \in \oh_{\CC_K}^{\flat}.$$ 
There is a map $\AA\rightarrow \ainf$ given by sending $q-1$ to  $[\varepsilon^{1/p}]-1,$ 
and it can be shown that $\Ker{\theta}$ is generated by the image $\xi$ of $[p]_q$ under this map. We thus obtain a map of prisms $\AA\rightarrow \ainf$; modulo $I$, this map becomes the inclusion 
$$W(k)[[q-1]]/([p]_q)\simeq W(k)[\zeta_p]\rightarrow \oh_{\CC_K}.$$

\begin{lem}
The map $\AA \rightarrow \ainf$ is faithfully flat.
\end{lem}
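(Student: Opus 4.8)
The plan is to verify faithful flatness by checking flatness first and then surjectivity on spectra (or equivalently, that $\mathfrak{m}_{\AA}\ainf \neq \ainf$). Since both rings are $(p, I)$-complete and $\AA$ is Noetherian, I would reduce the flatness assertion to a statement modulo $p$ via the local criterion for flatness over complete Noetherian rings: it suffices to show that $\AA/p \to \ainf/p$ is flat and that $\ainf$ is $p$-torsion free (the latter is standard, as $\ainf = W(\oh_{\CC_K^\flat})$). Now $\AA/p = k[[q-1]]$, which is a complete DVR (with uniformizer $q-1$, since $F(q) \equiv (q-1)^{p-1} \bmod p$, so $I$ is not the issue here — the ring $k[[q-1]]$ itself is regular of dimension one). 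Over a DVR, flatness is equivalent to torsion-freeness, so I am reduced to showing that $q - 1$ acts injectively on $\ainf/p = \oh_{\CC_K^\flat}$, i.e. that the image of $q-1$, which is $\varepsilon^{1/p} - 1 \in \oh_{\CC_K^\flat}$, is a non-zero-divisor. But $\oh_{\CC_K^\flat}$ is a valuation ring (it is the tilt of $\oh_{\CC_K}$, which is a rank-one valuation ring), hence a domain, and $\varepsilon^{1/p} - 1 \neq 0$ since $\varepsilon \neq 1$; so this is immediate.

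Next, for the faithfulness I need the map to be surjective on $\spec$, equivalently that every maximal ideal of $\AA$ is the contraction of a prime of $\ainf$. The ring $\AA = W(k)[[q-1]]$ is local with maximal ideal $\mathfrak{m} = (p, q-1)$, so it suffices to check $\mathfrak{m}\ainf \neq \ainf$, i.e. that $p$ and $\varepsilon^{1/p} - 1$ do not generate the unit ideal of $\ainf$. This follows because $\ainf/(p, \varepsilon^{1/p}-1) = \oh_{\CC_K^\flat}/(\varepsilon^{1/p}-1)$, and $\varepsilon^{1/p} - 1$ is a non-unit in the valuation ring $\oh_{\CC_K^\flat}$ (its valuation is positive, as $\varepsilon - 1$ has positive valuation because $\zeta_p \not\equiv 1$ suitably), so this quotient is nonzero. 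Alternatively, and perhaps more cleanly, one observes that modulo $I$ the map becomes $W(k)[\zeta_p] \hookrightarrow \oh_{\CC_K}$, which is a local homomorphism of local rings, hence already surjective on the closed point; combined with $(p,I)$-completeness this gives faithfulness.

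A cleaner packaging of the whole argument: since $\AA$ is $(p,I)$-adically complete and Noetherian and $\ainf$ is derived $(p,I)$-complete and $(p, I)$-torsion-free in the appropriate sense, flatness of $\AA \to \ainf$ can be tested after base change along $\AA \to \AA/I = W(k)[\zeta_p]$ and along $\AA \to \AA/p$; the former gives $W(k)[\zeta_p] \to \oh_{\CC_K}$, which is faithfully flat because $\oh_{\CC_K}$ is a torsion-free (hence flat) module over the DVR $W(k)[\zeta_p]$ and the extension is local; the latter gives $k[[q-1]] \to \oh_{\CC_K^\flat}$, which is flat by the torsion-freeness computation above. I expect the only genuine subtlety to be the bookkeeping around completeness and the passage between ``derived'' and classical $(p,I)$-completeness for $\ainf$ — ensuring that the fiberwise flatness criterion genuinely applies — whereas the DVR-level torsion-freeness inputs are essentially formal once one knows that $\oh_{\CC_K^\flat}$ and $\oh_{\CC_K}$ are domains (indeed valuation rings) and that $\varepsilon \neq 1$.
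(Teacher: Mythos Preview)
Your proof is correct and follows essentially the same strategy as the paper: reduce flatness to a statement modulo $p$ via $p$-adic completeness, and handle faithfulness by observing that the maximal ideal $(p,q-1)$ of $\AA$ pulls back from (equivalently, does not extend to the unit ideal in) $\ainf$. The only difference is packaging: the paper invokes the criterion that flatness can be checked modulo $p^n$ for all $n$ (Stacks~0912) and then cites a result of Emerton--Gee for the flatness of $\AA/p^n \to \ainf/p^n$, whereas you go directly to the mod~$p$ statement via the local flatness criterion (using $p$-torsion--freeness of $\ainf$ for the $\mathrm{Tor}_1$ vanishing) and then give the explicit DVR argument over $k[[q-1]]$. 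Your version is more self-contained; the paper's is terser but relies on outside references.
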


\begin{proof}
Both $\AA$ and $\ainf$ are $p$--adically complete with $\AA$ Noetherian. Thus, to prove flatness, by \cite[Lemma~0912]{stacks} it is enough to show that $\AA/p^n \rightarrow \ainf/p^n$ is flat for every $n$. This latter statement is a special case of \cite[Proposition~2.2.12]{EmertonGee2}.
For faithful flatness, it is enough to observe that the (unique) maximal ideal of $\ainf$ lies above the unique maximal ideal $\mathfrak{m}_{\AA}=(p, q-1)$.\end{proof}

For an integer $s \in \NN$, denote by $K_{s}$ the extension of $K$ given by $K(\mu_{p^{s+1}})$. Further set $K_{\infty}=K(\mu_{p^\infty})=\bigcup_s K_{s}$.  For every $s \in \NN \cup \{\infty\}$, denote by $G_s$ the absolute Galois group of $K_{s}$. Further denote by $\Gamma$ the topological group $\gal(K_{\infty}/K)$. Then $\Gamma$ naturally identifies with the quotient $G_K/G_\infty$. For each $s \in \NN$, denote by $\Gamma_s$ the image of $G_s$ in $\Gamma\simeq G_K/G_\infty$. Then we similarly have $\Gamma_s=\gal(K_{\infty}/K_{s}).$ If $\chi: G_K \to \ZZ_p^{\times}$ denotes the cyclotomic character, then $\chi$ induces an isomorphism of $\Gamma$ with $\ZZ_p^{\times}$. Under this isomorphism, for every $s \in \NN$, $\Gamma_s$ corresponds to the subgroup $1+p^{s+1}\ZZ_p$ (of index $(p-1)p^{s}$ in $\ZZ_p^{\times}$).

\begin{rem}
In the case $p=2$, the behavior of the tower $K_s$ is slightly different from when $p$ is odd. Due to the fact that $K(\mu_2)=K$, the filtration of $G_K$ by the subgroups $G_{K(\mu_{2^t})}$ has first jump for $t=2$ rather than $t=1$ as in the odd case. In order to better keep track of differences that occur due to this, let us denote by $\delta_p$ the Kronecker delta $\delta_{2, p},$ i.e., $\delta_p=1$ when $p=2$ and  $\delta_p=0$ otherwise.
\end{rem}

With this convention established, let us fix a choice of an element $\gamma \in \Gamma$ such that $\chi(\gamma)=1+p^{1+\delta_p}$. Then $\gamma$ is a topoloical generator of $\Gamma_{\delta_p}$ (the first proper term of the filtration by $\{\Gamma_s\}_s$ of $\Gamma$), and for every $s \in \NN$, $\gamma^{p^s}$ topologically generates $\Gamma_{s+\delta_p}$. 

The group $\Gamma$ has natural action on $\AA$ via 
$$g(q)=q^{\chi(g)},\;\; g \in \Gamma,$$
making the map $\AA \rightarrow \ainf$ $G_K$--equivariant when treating the $\Gamma$--action as a $G_K$--action via the map $G_K \twoheadrightarrow G_K/G_{\infty}\simeq \Gamma$ (the $G_K$--action on $\ainf$ is induced by the one on $\oh_{\CC_K}$ by functoriality). In particular, we have the identity $\gamma(q)=q^{1+p^{1+\delta_p}}$.

\subsection{Ramification groups and Fontaine's property $(P_m)$}

For an algebraic extension $F/K$, denote by $v_F$ the additive valuation on $F$ normalized by $v_F(F^{\times})=\ZZ$. Given finite extensions $F/E/K$ with $F/E$ Galois, the lower--index numbering on ramification groups of $G=\gal(F/E)$ we consider is
$$G_{(\lambda)}=\{g \in G\;|\; v_F(g(x)-x)\geq \lambda\}, \;\; \lambda \in \RR_{\geq 0}.$$
Note that $G_{(\lambda)}=G_{\lambda-1},$ where $G_{\lambda}$ are the usual lower--index ramification groups as in \cite[\S IV]{SerreLocalFields}.

For $t \geq 0$, we define the Herbrand function $\varphi_{F/E}(t)$ by
$$\phi_{F/E}(t)=\int_0^{t}\frac{\mathrm{d}s}{[G_{(1)}:G_{(s)}]},$$
(which makes sense since $G_{(s)} \subseteq G_{(1)}$ for all $s>0$). This is an increasing, concave, piecewise linear function, and we define $\psi_{F/E}$ to be the inverse function of $\phi_{F/E}$. Then the upper--index ramification subgroups of $G$ are given by
$$G^{(u)}=G_{(\psi_{F/E}(u))}, \;\; u \in \RR_{\geq 0}.$$
Once again, this numbering is related to the numbering $G^{u}$ given in \cite[\S~IV]{SerreLocalFields} by $G^{(u)}=G^{u-1}$. In particular, the numbering $G^{(u)}$ is compatible with passing to quotients. Given a possibly infinite Galois extension $N/E$, we may therefore set
$$\gal(N/E)^{(u)}=\varprojlim_{M}\gal(M/E)^{(u)},$$
where $M$ ranges over finite Galois extensions $M/E$ contained in $N$.

Given an algebraic extension $M/K$ and a real number $m> 0$, we denote by $\aa_M^{> m}$ ($\aa_M^{\geq m}$, resp.) the ideal of all elements $x \in \oh_M$ with $v_K(x)>m$ ($v_K(x)\geq m,$ resp.). We consider the following condition formulated by Fontaine \cite{Fontaine}:

$$\begin{array}{cc}
(P_m^{F/E}): & \begin{array}{l}\text{For any algebraic extension }M/E,\text{ if there exists an }\oh_E\text{--algebra map}\\ \oh_F\rightarrow \oh_M/\mathfrak{a}_M^{>m},\text{ then there exists an }E\text{--algebra map }F\hookrightarrow M.\end{array}
\end{array}$$

Let us now assume that $F/E$ is finite. We let $\mu_{F/E}$ denote the infimum of all $u$ such that $\gal(F/E)^{(u)}=\{\mathrm{id}\}.$ We measure the ramification of $F/E$ in terms of the invariant $\mu_{F/E}$, which is closely connected with the property $(\mathrm{P}_m)$:

\begin{prop}[{\cite[Proposition 1.5]{Fontaine}, \cite[Propositions 2.2, 3.3]{Yoshida}, \cite[Proposition 4.2.1]{CarusoLiu}}]\label{prop:PmAndRamification}
Denote by $e_{E/K}$ the ramification index of $E/K$, and let $m>0$ be a real number. If $(\mathrm{P}_m^{F/E})$ holds then $\mu_{F/E}\leq  e_{E/K}m$. Moreover, the validity of $(\mathrm{P}_m^{F/E})$ and the value of $\mu_{F/E}$ remain unchanged if $E$ is replaced by any subfield $E'$ of $F$ unramified over $E$.
\end{prop}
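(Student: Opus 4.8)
The plan is to establish the two assertions separately, treating the unramified-descent statement first since it lets me reduce the main bound to the totally ramified case.

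\textbf{Unramified descent.} First I would note that $\aa_M^{>m}$ is defined via $v_K$, hence is intrinsic to $M$ and insensitive to the base field, and that $e_{F/E'}=e_{F/E}$ because $E'/E$ is unramified; so only the claims about $\mu$ and about the validity of $(\mathrm{P}_m^{F/E})$ need an argument. I would reduce to the case $E'=E_0$, the maximal unramified subextension of $F/E$: since $E_0$ is also the maximal unramified subextension of $F/E'$, applying the $E_0$-statement with $E$ replaced by $E'$ would yield the general case, and $E_0/E$ has the advantage of being \emph{Galois} (its group being the inertia quotient of $\gal(F/E)$). For $E_0/E$ itself: the lower ramification subgroups $\gal(F/E)_{(\lambda)}$ for $\lambda\geq 1$ are contained in the inertia $\gal(F/E_0)$ and, by compatibility of the lower numbering with subgroups, coincide with $\gal(F/E_0)_{(\lambda)}$; hence the Herbrand functions $\phi_{F/E}$ and $\phi_{F/E_0}$ agree (for $s\leq 1$ both integrands equal $1$, and for $s>1$ they coincide by the subgroup compatibility), and so $\mu_{F/E}=\mu_{F/E_0}$. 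For the equivalence $(\mathrm{P}_m^{F/E})\Leftrightarrow(\mathrm{P}_m^{F/E_0})$: in one direction, given $M/E_0$ and an $\oh_{E_0}$-algebra map $\oh_F\to\oh_M/\aa_M^{>m}$, I would restrict it to $\oh_{E_0}$, lift this restriction to an honest embedding $\oh_{E_0}\hookrightarrow\oh_M$ by Hensel's lemma (using that $\oh_{E_0}/\oh_E$ is étale and $\oh_M$ is henselian), observe that the original map is then automatically $\oh_{E_0}$-linear, and invoke $(\mathrm{P}_m^{F/E})$; in the other direction, I would apply $(\mathrm{P}_m^{F/E_0})$ to obtain an $E$-embedding $\iota\colon F\hookrightarrow M$ and correct it to an $E_0$-embedding, using that $E_0/E$ is normal (so $\iota|_{E_0}\in\gal(E_0/E)$) and that $F/E$ is Galois (so $\iota|_{E_0}^{-1}$ lifts to $\gal(F/E)$, and precomposing $\iota$ with such a lift fixes $E_0$).

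\textbf{The bound.} By the above I may assume $F/E$ totally ramified; write $\oh_F=\oh_E[\pi]$ with $\pi$ a uniformizer and $P\in\oh_E[X]$ its Eisenstein minimal polynomial, and recall that $\mu_{F/E}=\phi_{F/E}(\lambda)$, where $\lambda=\max_{g\neq\mathrm{id}}v_F(g(\pi)-\pi)$. I would argue by contraposition: assuming $\mu_{F/E}>e_{F/E}m$, I want an algebraic extension $M/E$ carrying an $\oh_E$-algebra map $\oh_F\to\oh_M/\aa_M^{>m}$ but admitting no $E$-embedding $F\hookrightarrow M$, contradicting $(\mathrm{P}_m^{F/E})$. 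The extension $M=E(\varpi)$ would be built by \emph{truncating} $P$: one deletes from $P$ the coefficients of highest valuation — those accounting for the deepest ramification break $\lambda$ — to obtain an Eisenstein polynomial $\widetilde{P}$ which is still irreducible but whose root field is \emph{not} $E$-isomorphic to $F$ (detected through an invariant such as the different exponent, which the truncation alters). A suitable root $\varpi$ of $\widetilde{P}$ is then an approximate root of $P$, and the point is the estimate $v_K(P(\varpi))\geq \mu_{F/E}/e_{F/E}>m$: the left inequality is where the structure enters, via the factorization of $P$ along the orbits of a maximally ramified $g\in\gal(F/E)$ on its roots, together with the identity relating $\sum_{h\neq\mathrm{id}}v_F(h(\pi)-\pi)$ to a different, rewritten through $\phi_{F/E}$. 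Then $\pi\mapsto\overline{\varpi}$ defines the required map and $(\mathrm{P}_m^{F/E})$ fails.

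\textbf{Main obstacle.} The conceptual scaffolding above is routine; the genuine difficulty is the construction of the auxiliary $M$, where I must keep $M$ close enough to $F$ that $P$ acquires an approximate root modulo $\aa_M^{>m}$, yet far enough — in the Krasner sense — that $F$ does not $E$-embed into $M$, and calibrate the trade-off against the threshold $e_{F/E}m$ using the precise dictionary between differents, the ramification filtration, and the Herbrand function $\phi_{F/E}$. This valuation bookkeeping is the technical core carried out in \cite{Fontaine, Yoshida, CarusoLiu}.
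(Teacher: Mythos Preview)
The paper does not supply its own proof of this proposition: it is stated with a citation to \cite{Fontaine, Yoshida, CarusoLiu} and no proof environment follows. So there is nothing in the paper to compare your argument against; the proposition is treated as imported from the literature.

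That said, your outline is broadly in the spirit of those references, and the reduction to the totally ramified case followed by a contrapositive construction of a ``near--but--not--equal'' extension $M$ is exactly the shape of Fontaine's original argument. Two remarks are in order.

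First, in your unramified-descent paragraph the two directions of the equivalence $(\mathrm{P}_m^{F/E})\Leftrightarrow(\mathrm{P}_m^{F/E_0})$ have had their key steps swapped. The Hensel/\'{e}tale lifting is needed for $(\mathrm{P}_m^{F/E_0})\Rightarrow(\mathrm{P}_m^{F/E})$: there one starts with $M/E$ and an $\oh_E$-algebra map, and one must first \emph{produce} an embedding $E_0\hookrightarrow M$ before $(\mathrm{P}_m^{F/E_0})$ can be invoked. In the direction $(\mathrm{P}_m^{F/E})\Rightarrow(\mathrm{P}_m^{F/E_0})$, one is \emph{given} $M/E_0$ and an $\oh_{E_0}$-map, so the lifting step is vacuous; what is actually needed there is the Galois correction (an $E$-embedding $F\hookrightarrow M$ from $(\mathrm{P}_m^{F/E})$ must be adjusted to respect the given $E_0$-structure on $M$). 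Your sentence ``apply $(\mathrm{P}_m^{F/E_0})$ to obtain an $E$-embedding \dots and correct it to an $E_0$-embedding'' is internally inconsistent for the same reason. The ingredients you list are the right ones; they are just assigned to the wrong implications.

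Second, for the bound itself your sketch is honest about where the work lies: the construction of $M$ and the valuation estimate $v_K(P(\varpi))\geq \mu_{F/E}/e_{F/E}$ are precisely the content of the cited papers, and your ``truncation'' heuristic is only a rough caricature of Fontaine's actual construction (which is more structured than simply deleting high-valuation coefficients). Since the paper itself defers to those references, this level of detail is consistent with how the proposition is used.
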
 

Finally, let us record a lemma on the behavior of $\mu$ in towers that will be useful later on.

\begin{lem}[{\cite[Lemma~4.3.1]{CarusoLiu}}]\label{lem:MuInTowers}
Let $L/F/E$ be a tower of finite Galois extensions. Then
$$\mu_{L/E}=\mathrm{max}\left\{\mu_{F/E}, \phi_{F/E}(\mu_{L/F})\right\}.$$
\end{lem}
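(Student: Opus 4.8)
The plan is to deduce the formula formally from the two basic functorialities of ramification filtrations together with the transitivity of Herbrand functions, in the same spirit as one proves that the upper numbering is compatible with quotients. Write $G=\gal(L/E)$ and $H=\gal(L/F)$, so that $H\trianglelefteq G$ and $G/H\simeq \gal(F/E)$. I would use three inputs: first, Herbrand's theorem, which here reads $(G/H)^{(u)}=G^{(u)}H/H$ (compatibility of the upper numbering with quotients, already recorded in the excerpt); second, compatibility of the \emph{lower} numbering with subgroups, $H_{(\lambda)}=H\cap G_{(\lambda)}$ (see \cite[\S IV]{SerreLocalFields}, consistent with the shift conventions used here); third, transitivity $\phi_{L/E}=\phi_{F/E}\circ\phi_{L/F}$, equivalently $\psi_{L/E}=\psi_{L/F}\circ\psi_{F/E}$.

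First I would reduce to a pointwise statement. Since $u\mapsto G^{(u)}$ is a decreasing, eventually trivial step function, the set $\{u\geq 0: G^{(u)}=\{\mathrm{id}\}\}$ contains $(\mu_{L/E},\infty)$ and is contained in $[\mu_{L/E},\infty)$; hence for every $u_0\geq 0$ one has $\mu_{L/E}\leq u_0$ iff $G^{(u)}=\{\mathrm{id}\}$ for all $u>u_0$, and similarly for $\mu_{F/E}$ and $\mu_{L/F}$. It thus suffices to identify the set of $u_0$ for which $G^{(u)}=\{\mathrm{id}\}$ holds for all $u>u_0$. For a fixed $u$, observe that $G^{(u)}=\{\mathrm{id}\}$ if and only if both its image in $G/H$ and its intersection with $H$ are trivial: the image being trivial means $G^{(u)}\subseteq H$, and then triviality of $G^{(u)}\cap H$ forces $G^{(u)}=\{\mathrm{id}\}$.

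For the image, Herbrand's theorem gives $G^{(u)}H/H=(G/H)^{(u)}=\gal(F/E)^{(u)}$, so this is trivial for all $u>u_0$ precisely when $\mu_{F/E}\leq u_0$. For the intersection, compatibility of the lower numbering with $H$ together with $G^{(u)}=G_{(\psi_{L/E}(u))}$ gives $G^{(u)}\cap H=G_{(\psi_{L/E}(u))}\cap H=H_{(\psi_{L/E}(u))}$, and by the definition of the upper numbering on $H$ and the transitivity $\psi_{L/E}=\psi_{L/F}\circ\psi_{F/E}$ this equals $H^{(\phi_{L/F}(\psi_{L/E}(u)))}=H^{(\psi_{F/E}(u))}=\gal(L/F)^{(\psi_{F/E}(u))}$. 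Since $\psi_{F/E}$ is a continuous strictly increasing bijection of $[0,\infty)$, asking that $\gal(L/F)^{(\psi_{F/E}(u))}=\{\mathrm{id}\}$ for all $u>u_0$ is the same as asking $\gal(L/F)^{(v)}=\{\mathrm{id}\}$ for all $v>\psi_{F/E}(u_0)$, i.e. $\mu_{L/F}\leq\psi_{F/E}(u_0)$, i.e. $\phi_{F/E}(\mu_{L/F})\leq u_0$. Combining the two parts, $G^{(u)}=\{\mathrm{id}\}$ for all $u>u_0$ exactly when $\max\{\mu_{F/E},\phi_{F/E}(\mu_{L/F})\}\leq u_0$, which is the asserted identity.

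I do not expect a genuine obstacle; the only care needed is the bookkeeping with the two numbering conventions in play (recall $G_{(\lambda)}=G_{\lambda-1}$ and $G^{(u)}=G^{u-1}$ relative to \cite[\S IV]{SerreLocalFields}, both Herbrand's theorem and the transitivity of $\phi$ being insensitive to this common shift) and the elementary step-function properties of the filtrations used in the pointwise reduction. In particular, this is \cite[Lemma~4.3.1]{CarusoLiu}.
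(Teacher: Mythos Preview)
Your argument is correct: the reduction to the pointwise criterion ``$G^{(u)}=\{\mathrm{id}\}$ for all $u>u_0$'', the splitting into image in $G/H$ and intersection with $H$, and the identification $G^{(u)}\cap H=H^{(\psi_{F/E}(u))}$ via transitivity of the Herbrand functions are all sound, and the bookkeeping with the shifted numbering causes no trouble. Note that the paper itself gives no proof of this lemma at all --- it simply quotes \cite[Lemma~4.3.1]{CarusoLiu} --- so there is nothing to compare your approach against; what you have written is a clean self-contained proof of the cited result.
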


\section{Wach cohomology and Wach modules} \label{sec:WachCohomology}

Recall that given a (bounded) prism $(A, I)$, to a smooth $p$--adic formal scheme $\mathscr{X}$ over $A/I$ one can associate the prismatic cohomology $\R\Gamma_{\Prism}(\mathscr{X}/A)$. Its cohomology groups $\H_{\Prism}^i(\mathscr{X}/A)$ are $A$--modules equipped with, among other structures, a $\varphi_A$--semilinear operator $\varphi$. When $\mathscr{X}$ is proper, $\R\Gamma_{\Prism}(\mathscr{X}/A)$ is represented by a perfect complex (see \cite[Theorem~1.8]{BhattScholze}).

In our setting, the relevant variants of prismatic cohomology are the following.

\begin{deff}
Consider a smooth proper $p$--adic formal scheme $\mathscr{X}$ over $W(k)$, and denote by $\mathscr{X}_p$ the base change of $\mathscr{X}$ to $W(k)[\zeta_p]$. By \textit{Wach cohomology} of $\mathscr{X}$, we mean the prismatic cohmology $\R\Gamma_{\Prism}(\mathscr{X}_p/\AA)$. The individual Wach cohomology groups are denoted by $\H^i_{\Prism}(\mathscr{X}_p/\AA)$. 

The \textit{mod $p$ Wach cohomology of $\mathscr{X}$} is given by $$\overline{\R\Gamma_{\Prism}(\mathscr{X}_p/\AA)}=\R\Gamma_{\Prism}(\mathscr{X}_p/\AA)\stackrel{\L}{\otimes}_{\ZZ}\ZZ/p\ZZ.$$
We denote by $\overline{\H^i_{\Prism}(\mathscr{X}_p/\AA)}$ the individual cohomology groups of the mod $p$ Wach cohomology of $\mathscr{X}$.
\end{deff}

Let us also recall a version of Wach modules suitable for our purposes. From the standard definitions \cite{Wach, ColmezWachM, BergerWachM}, it deviates in that we allow Wach modules that are not necessarily free.

\begin{deff}\label{def:WachModule}
A \textit{Wach module of height $\leq i$} is a finitely generated $\AA$--module $M$ endowed with a continuous, $\AA$--semilinear action of $\Gamma$ and a $\varphi_{\AA}$--semilinear, $\Gamma$-equivariant map $\varphi: M \rightarrow M$  satisfying the following:
\begin{enumerate}[(1)]
\item{The linearization $\varphi_{\mathrm{lin}}=1\otimes {\varphi}:\varphi_{\AA}^*M \rightarrow M$ of $\varphi$ admits a map $\psi:M \rightarrow \varphi_{\AA}^*M$ such that both $\varphi_{\mathrm{lin}} \circ \psi$ and $\psi \circ \varphi_{\mathrm{lin}}$ are given by multiplication by $([p]_q)^i$.}\label{def:WachModulePhi}
\item{The induced $\Gamma$--action on $M/(q-1)M$ is trivial. Equivalently, for every $g \in \Gamma$ we have
$$(g-1)M \subseteq (q-1)M.$$}\label{def:WachModuleCr}
\end{enumerate}
\end{deff}

We are in particular interested in the case when $M$ is annihilated by $p$, i.e. when $M$ is a module over $\AA/p \simeq k[[q-1]]$. We refer to $M$ as \textit{$p$--torsion Wach module} in this case.

Wach cohomology groups naturally give rise to Wach modules. As stated earlier, $\varphi$ comes directly from its description as prismatic cohomology. Let us now discuss the $\Gamma$--action portion of the data.

\begin{lem} \label{lem:GKInvarianceOfI}
The ideal $I=([p]_q) \subseteq \AA$ is stable under the $G_K$--action. 
\end{lem}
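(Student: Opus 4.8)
The plan is to reduce to the procyclic group $\Gamma$ and then conclude by a short computation modulo $I$; a second, descent-based route via the Fontaine prism is also available. First, since the $G_K$-action on $\AA$ factors through the quotient $\Gamma=G_K/G_\infty$, it suffices to show $g\cdot I=I$ for every $g\in\Gamma$. As each $g$ acts on $\AA$ by a ring automorphism, $g\cdot I$ is the principal ideal generated by $g(F(q))$, so it is enough to prove $g(F(q))\in I$; applying the same to $g^{-1}$ then upgrades this inclusion to an equality.

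To show $g(F(q))\in I$, I would put $\sigma=\chi(g)\in\ZZ_p^\times$, so that $g(q)=q^\sigma$, where $q^\sigma=(1+(q-1))^\sigma=\sum_{n\geq 0}\binom{\sigma}{n}(q-1)^n$ is the $(q-1)$-adically convergent binomial series in $\AA=W(k)[[q-1]]$ (it converges since $\binom{\sigma}{n}\in\ZZ_p$). Applying $g$ to the identity $q^p-1=(q-1)F(q)$ gives
\[
q^{p\sigma}-1=(q^\sigma-1)\,g(F(q))\qquad\text{in }\AA,
\]
and I would reduce this modulo $I$. In $\AA/I\simeq W(k)[\zeta_p]$, which is a domain, the image of $q$ is the primitive $p$-th root of unity $\zeta_p$; by continuity of $t\mapsto q^t$ and $\zeta_p^p=1$, the image of $q^\sigma$ is $\zeta_p^{\,\sigma\bmod p}$, again a primitive $p$-th root of unity since $\sigma$ is a $p$-adic unit, and hence the image of $q^{p\sigma}=(q^\sigma)^p$ is $1$. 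The displayed equation thus reduces to $0=(\zeta_p^{\,\sigma\bmod p}-1)\cdot\overline{g(F(q))}$ with $\zeta_p^{\,\sigma\bmod p}-1\neq 0$, so $\overline{g(F(q))}=0$, that is, $g(F(q))\in I$, as wanted.

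A cleaner alternative, which bypasses the binomial-series bookkeeping, is to transport the $G_K$-stability of $\Ker\theta$ along $\AA\to\ainf$: the Fontaine map $\theta\colon\ainf\to\oh_{\CC_K}$ is $G_K$-equivariant, so $\Ker\theta$ is a $G_K$-stable ideal of $\ainf$; since $\Ker\theta$ is generated by the image of $F(q)$, we have $I\ainf=\Ker\theta$, and therefore $(g\cdot I)\ainf=g(I\ainf)=\Ker\theta=I\ainf$ for every $g\in G_K$. Because $\AA\to\ainf$ is faithfully flat by the previous lemma, an equality of extended ideals descends to an equality in $\AA$ (tensor $(g\cdot I+I)/I$ with $\ainf$ and use flatness together with faithfulness), so $g\cdot I=I$.

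I do not expect a genuine obstacle here, as the statement is fairly soft; the only points needing a little care are, in the first approach, that the $\Gamma$-action on $\AA$ is indeed the continuous substitution $q\mapsto q^{\chi(g)}$ and that it reduces modulo $I$ exactly as described (both immediate once one notes the continuity of $t\mapsto q^t$ and $q^p\equiv 1\pmod I$), and, in the second approach, the faithfully flat descent of an equality of ideals.
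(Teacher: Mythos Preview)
Your first argument is correct and is essentially the paper's proof, which phrases it more succinctly: the quotient map $\AA \to \AA/I \simeq W(k)[\zeta_p]$, $q \mapsto \zeta_p$, is $G_K$-equivariant (this is exactly your computation $q^{\chi(g)} \mapsto \zeta_p^{\chi(g)\bmod p} = g(\zeta_p)$), so its kernel $I$ is $G_K$-stable. Your second, descent-based alternative via faithful flatness of $\AA \to \ainf$ is also correct but is not the route the paper takes; it trades the direct check in $W(k)[\zeta_p]$ for the $G_K$-equivariance of $\theta$ together with the previous lemma, giving a nice illustration that such structural facts descend along the faithfully flat embedding, at the mild cost of invoking that extra input.
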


\begin{proof}
The map $\AA \rightarrow \AA/I \simeq  W(k)[\zeta_p]$ sends $q$ to $\zeta_p$; it is then easy to see that this map is $G_K$--equivariant. Therefore, the kernel $I$ is necessarily $G_K$--stable.
\end{proof}

For a smooth proper $p$--adic formal scheme $\mathscr{X}$ over $W(k)$, there is a natural $G_K$--action on $\R\Gamma_{\Prism}(\mathscr{X}_p/\AA)$ given as follows. For $g \in G_K,$ acting by $g$ gives a map of prisms $g:(\AA,[p]_q) \rightarrow (\AA, [p]_q)$. By base change of prismatic cohomology \cite[Thm 1.8 (5)]{BhattScholze}, we obtain an $\AA$--linear isomorphism
\begin{align}\label{ActionMap}
g^*\R\Gamma_{\Prism}(\mathscr{X}_p/\AA)\rightarrow \R\Gamma_{\Prism}(g^*\mathscr{X}_p/\AA)=\R\Gamma_{\Prism}(\mathscr{X}_p/\AA)
\end{align}
where the last identity comes from identifying $g^*\mathscr{X}_p$ with $\mathscr{X}_p$ via the canonical isomorphism $g^*\mathscr{X}_p\rightarrow \mathscr{X}_p$. The above map can then be identified with an $\AA$--$g$--semilinear map 
$$g: \R\Gamma_{\Prism}(\mathscr{X}_p/\AA)\rightarrow \R\Gamma_{\Prism}(g^*\mathscr{X}_p/\AA),$$ 
which is the action of $g$ on $\R\Gamma_{\Prism}(\mathscr{X}_p/\AA)$. When $g$ is from $G_{{\infty}},$ it acts trivially on both $\mathscr{X}_p$ and $\AA$; thus, the above action becomes an action of $G_K/G_{{\infty}},$ i.e., action of $\Gamma$.

The $G_K$--action on $\R\Gamma_{\Prism}(\mathscr{X}_{\oh_{\CC_K}}/\ainf)$ can be described similarly; consequently, it is easy to see that the base-change map 
$$\R\Gamma_{\Prism}(\mathscr{X}_p/\AA)\stackrel{\L}{\widehat{\otimes}_{\AA}}\ainf\rightarrow \R\Gamma_{\Prism}(\mathscr{X}_{\oh_{\CC_K}}/\ainf)$$
of \cite[Theorem~1.8 (5)]{BhattScholze} is $G_K$--equivariant (of course, this action no longer factors through $\Gamma$).

There is a complex $C^{\bullet}(\mathscr{X}_p)$ ($C^{\bullet}(\mathscr{X}_{\oh_{\CC_K}}),$ resp.) modelling $\R\Gamma_{\Prism}(\mathscr{X}_p/\AA)$ ($\R\Gamma_{\Prism}(\mathscr{X}_{\oh_{\CC_K}}/\ainf)$, resp.) with the following properties:
\begin{enumerate}[(1)]
\item{\label{CechFlat}($C^{\bullet}(\mathscr{X}_p)$ ($C^{\bullet}(\mathscr{X}_{\oh_{\CC_K}}),$ resp.) is a perfect complex and consists termwise of flat $\AA$-modules ($(p, \xi)$--completely flat $\ainf$--modules, resp.),}
\item{\label{CechGaction} The $G_K$--action on $\R\Gamma_{\Prism}(\mathscr{X}_p/\AA)$ ($\R\Gamma_{\Prism}(\mathscr{X}_{\oh_{\CC_K}}/\ainf)$, resp.) comes from a (``strict'') $G_K$--action on $C^{\bullet}(\mathscr{X}_p)$ ($C^{\bullet}(\mathscr{X}_{\oh_{\CC_K}}),$ resp.). In more detail, for every $g \in G_K$ there is an isomorphism $g^*C^{\bullet}(\mathscr{X}_p)\stackrel{\sim}{\rightarrow}C^{\bullet}(g^*\mathscr{X}_p)=C^{\bullet}(\mathscr{X}_p)$ that represents the map (\ref{ActionMap}), and which defines a semilinear action of $G_K$ on $C^{\bullet}(\mathscr{X}_p)$ (similarly for $\R\Gamma_{\Prism}(\mathscr{X}_{\oh_{\CC_K}}/\ainf)$). Moreover, $G_{\infty}$ acts trivially on $C^{\bullet}(\mathscr{X}_p)$, hence we get a $\Gamma$--action on $C^{\bullet}(\mathscr{X}_p)$.}
\item{We have $C^{\bullet}(\mathscr{X}_{\oh_{\CC_K}}) \simeq C^{\bullet}(\mathscr{X}_p){\widehat{\otimes}_{\AA}}\ainf$, compatibly with the $G_K$--action. Here the completed tensor product is computed term--by--term.}
\end{enumerate}

The existence of such complexes was established in \cite[\S 2.2]{Ram1} by extending a Čech--Alexander construction of \cite{BhattScholze} from affine case to the case of a general separated formal scheme. In the case of the complex $C^{\bullet}(\mathscr{X}_{\oh_{\CC_K}}),$ the presence of the Galois action is then explained in detail in \cite[\S 4.1]{Ram1}. In the case of the complex $C^{\bullet}(\mathscr{X}_p)$, the Galois action is established completely analogously.

The following proposition is the analogue of the condition $\Cr{0}$ from \cite{Ram1} in our present context.

\begin{prop}\label{prop:crys}
For all $i$ and all $g \in \Gamma,$ we have $$(g-1)C^i(\mathscr{X}_p)\subseteq (q-1)C^i(\mathscr{X}_p).$$
\end{prop}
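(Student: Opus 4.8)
The plan is to reduce the statement to the corresponding fact about the Čech–Alexander complex $C^\bullet(\mathscr{X}_{\oh_{\CC_K}})$ over $\ainf$, where it becomes a statement about the action of $\Gamma$ (coming from $G_K/G_{K_{p^\infty}}$) on the cohomology of $\mathscr{X}_{\oh_{\CC_K}}$ relative to the Fontaine prism, and then descend back along the faithfully flat map $\AA \to \ainf$. The key point is that modulo $q-1$ the map of prisms $(\AA, [p]_q) \to (\ainf, \Ker\theta)$ reduces to $W(k) \to \oh_{\CC_K}$ (since $q\mapsto \varepsilon^{1/p}$ and $q-1 \mapsto [\varepsilon^{1/p}]-1$, while $[p]_q$ reduces to $0$ modulo $q-1$ — wait, more precisely one works modulo $(q-1)$, under which $[p]_q \equiv p$), and the base change $\mathscr{X}_p \otimes_{W(k)[\zeta_p]} \oh_{\CC_K} = \mathscr{X}_{\oh_{\CC_K}}$ on the special fibers is compatible with this. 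So $C^\bullet(\mathscr{X}_p)/(q-1) \hookrightarrow C^\bullet(\mathscr{X}_{\oh_{\CC_K}})/(q-1)$ in a $G_K$-equivariant way, the point being that this inclusion is \emph{injective} termwise because $C^\bullet(\mathscr{X}_{\oh_{\CC_K}}) \simeq C^\bullet(\mathscr{X}_p)\widehat{\otimes}_\AA \ainf$ term-by-term and $\AA/(q-1) \to \ainf/(q-1)$ is (faithfully) flat, hence injective, and the terms of $C^\bullet(\mathscr{X}_p)$ are flat $\AA$-modules.

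First I would set up the reduction: fix $i$ and $g\in\Gamma$, pick a lift $\tilde g\in G_K$, and note that on $C^\bullet(\mathscr{X}_p)$ the operator $g-1$ (well-defined since $G_{K_{p^\infty}}$ acts trivially) maps into $(q-1)C^i(\mathscr{X}_p)$ if and only if its image under the termwise-injective map $C^i(\mathscr{X}_p)\hookrightarrow C^i(\mathscr{X}_{\oh_{\CC_K}})$ lands in $(q-1)C^i(\mathscr{X}_{\oh_{\CC_K}}) \cap C^i(\mathscr{X}_p) = (q-1)C^i(\mathscr{X}_p)$; here the last equality uses flatness of $C^i(\mathscr{X}_p)$ over $\AA$ together with faithful flatness of $\AA\to\ainf$ to identify $(q-1)\ainf \cap \AA = (q-1)\AA$ and propagate this to the modules. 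So it suffices to prove $(g-1)C^i(\mathscr{X}_{\oh_{\CC_K}}) \subseteq (q-1)C^i(\mathscr{X}_{\oh_{\CC_K}})$, equivalently that $\tilde g$ acts trivially on $C^i(\mathscr{X}_{\oh_{\CC_K}})/(q-1)$. Now $C^i(\mathscr{X}_{\oh_{\CC_K}})/(q-1)$ is a term of a complex computing the (derived mod $q-1$ reduction of) prismatic cohomology of $\mathscr{X}_{\oh_{\CC_K}}$ over $\ainf$, and modulo $q-1$ the prism structure together with $\mathscr{X}_{\oh_{\CC_K}}$ is pulled back from $W(k)$ — more precisely, the entire Čech–Alexander datum modulo $q-1$ is obtained by base change from the analogous datum for $\mathscr{X}$ over $W(k)$ along $W(k)\to \oh_{\CC_K}$, on which $G_K$ acts through its action on $\oh_{\CC_K}$ fixing $W(k)$; but I would instead argue directly that the whole system $(\ainf, \Ker\theta, \mathscr{X}_{\oh_{\CC_K}})$ modulo $q-1$ does not quite see the Galois action trivially, so the cleanest route is: $g-1$ kills $\AA/(q-1)=W(k)[\zeta_p]/(\zeta_p-1)\cdot(\text{stuff})$ — actually $\AA/(q-1) = W(k)$, on which $\Gamma$ acts trivially — hence $g-1$ maps $C^\bullet(\mathscr{X}_p)$ into the kernel of reduction mod $(q-1)$ after we know the action on $C^\bullet(\mathscr{X}_p)/(q-1)$ is trivial.

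To finish the heart of the matter I would show $\Gamma$ acts trivially on $C^\bullet(\mathscr{X}_p)/(q-1)C^\bullet(\mathscr{X}_p)$ directly: the complex $C^\bullet(\mathscr{X}_p)$ is the Čech–Alexander complex built from a cover of $\mathscr{X}_p$ by the Wach prism, and its reduction modulo $q-1$ is the Čech–Alexander complex built from the cover of $\mathscr{X}_p \times_{W(k)[\zeta_p]} (W(k)[\zeta_p]/(\zeta_p-1)) = \mathscr{X}_p \times_{W(k)[\zeta_p],\,q\mapsto 1} W(k) = \mathscr{X}$ — i.e.\ of the original formal scheme $\mathscr{X}$ over $W(k)$ itself — relative to the prism $(\AA/(q-1), \ldots)=(W(k), (p))$; since $\Gamma$ fixes $W(k)$ pointwise and the isomorphism $g^*\mathscr{X}_p \cong \mathscr{X}_p$ is the canonical one compatible with descent to $\mathscr{X}/W(k)$, the induced action on this Čech–Alexander complex is the identity term by term. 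The main obstacle I anticipate is bookkeeping the compatibility of the "strict" $G_K$-action on $C^\bullet(\mathscr{X}_p)$ (as in property (\ref{CechGaction})) with the reduction modulo $q-1$, i.e.\ checking carefully that the isomorphisms $g^*C^\bullet(\mathscr{X}_p)\xrightarrow{\sim}C^\bullet(\mathscr{X}_p)$ reduce modulo $q-1$ to the identity rather than merely to a homotopic map — but since the action was constructed so that $G_{K_{p^\infty}}$ acts trivially and $\Gamma$ acts through automorphisms of the prism $(\AA, [p]_q)$ that are the identity modulo $q-1$, this should follow from functoriality of the Čech–Alexander construction with respect to the base change $\AA \to \AA/(q-1) = W(k)$.
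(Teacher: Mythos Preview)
Your final paragraph is essentially the paper's proof: reduce modulo $q-1$ and observe that the resulting \v{C}ech--Alexander complex computes prismatic cohomology over the crystalline prism $(W(k),(p))$, on which $\Gamma$ acts trivially because it fixes both the base ring and the formal scheme. The first two paragraphs --- routing through $\ainf$ and faithful flatness --- are an unnecessary detour that you yourself abandon; the direct argument suffices and is exactly what the paper does.

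One slip to correct: you claim $W(k)[\zeta_p]/(\zeta_p-1)=W(k)$ and hence that the reduced scheme is $\mathscr{X}$. In fact $W(k)[\zeta_p]/(\zeta_p-1)\cong W(k)/p=k$ (since $\Phi_p(1)=p$), so there is no map $W(k)[\zeta_p]\to W(k)$ with $\zeta_p\mapsto 1$; the correct base change gives the special fibre $\mathscr{X}_k$ over $k$, and the reduced complex computes $\R\Gamma_{\Prism}(\mathscr{X}_k/W(k))$, i.e.\ (Frobenius--twisted) crystalline cohomology. This is harmless for the conclusion, since $\Gamma$ acts trivially on $W(k)$ and on $\mathscr{X}_k$ just as well, but it is the precise statement the paper uses.
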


\begin{proof}
Consider the complex $C^\bullet(\mathscr{X}_p)/(q-1)C^\bullet(\mathscr{X}_p)$, where the quotient is computed term-by-term. This is the \v{C}ech--Alexander complex computing $\R\Gamma_{\Prism}(\mathscr{X}_k/W(k)),$ that is, up to $\varphi_{W(k)}$--twist, the crystalline cohomology of the special fiber. The $\Gamma$--action on this complex, defined as in (\ref{CechGaction}), on one hand comes from $C^\bullet(\mathscr{X}_p)$, and on the other is trivial as $G_K$ acts trivially on both $W(k)$ and $\mathscr{X}_k$. This proves the claim.
\end{proof}

As a consequence of (\ref{CechFlat}) above, $\overline{\R\Gamma_{\Prism}(\mathscr{X}_p/\AA)}$ is modelled by the complex $C^{\bullet}(\mathscr{X})/pC^{\bullet}(\mathscr{X})$ (computed term--by--term). Then we have

\begin{cor}\label{cor:WachOnCohomology}
If $M$ is either $\H^i_{\Prism}(\mathscr{X}_p/\AA)$ or $\overline{\H^i_{\Prism}(\mathscr{X}_p/\AA)}, $ we have for all $g \in \Gamma$ $$(g-1)M \subseteq (q-1)M.$$
Consequently, $M$ is a Wach module of height $\leq i$.
\end{cor}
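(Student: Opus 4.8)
The plan is to deduce Corollary~\ref{cor:WachOnCohomology} from Proposition~\ref{prop:crys} together with the flatness/perfectness properties~(\ref{CechFlat}) and the $\varphi$--structure coming from prismatic cohomology. First I would handle the condition~(\ref{def:WachModuleCr}) of Definition~\ref{def:WachModule}, i.e.\ $(g-1)M\subseteq (q-1)M$. For $M=\H^i_{\Prism}(\mathscr{X}_p/\AA)$ this is not quite immediate from Proposition~\ref{prop:crys}: passing to cohomology of the complex $C^\bullet(\mathscr{X}_p)$ does not automatically preserve containments of the form ``$(g-1)(\text{cocycle})\in (q-1)(\text{cochains})$'' at the level of \emph{cohomology classes}, because an element of $(q-1)C^i$ that is a coboundary in $C^i$ need not be $(q-1)$ times a coboundary. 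The clean way around this is to observe that Proposition~\ref{prop:crys} says the $\Gamma$--action on the term-by-term quotient complex $\overline{C}^\bullet:=C^\bullet(\mathscr{X}_p)/(q-1)C^\bullet(\mathscr{X}_p)$ is trivial \emph{as a map of complexes} (not just up to homotopy), hence induces the trivial action on $\H^i(\overline{C}^\bullet)$. Now because each $C^i(\mathscr{X}_p)$ is a flat $\AA$--module and $q-1$ is a nonzerodivisor in $\AA$, tensoring the short exact sequence $0\to \AA\xrightarrow{q-1}\AA\to\AA/(q-1)\to 0$ with $C^\bullet(\mathscr{X}_p)$ gives a short exact sequence of complexes $0\to C^\bullet(\mathscr{X}_p)\xrightarrow{q-1}C^\bullet(\mathscr{X}_p)\to \overline{C}^\bullet\to 0$, and the associated long exact sequence shows $\H^i(\overline{C}^\bullet)$ surjects onto $M/(q-1)M$ where $M=\H^i_{\Prism}(\mathscr{X}_p/\AA)$ (in fact $M/(q-1)M\hookrightarrow \H^i(\overline C^\bullet)$, but a surjection is all I need). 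Since $g$ acts trivially on $\H^i(\overline{C}^\bullet)$ and this is compatible with the map to $M/(q-1)M$, we get that $g$ acts trivially on $M/(q-1)M$, which is exactly $(g-1)M\subseteq(q-1)M$.

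For the mod $p$ version $M=\overline{\H^i_{\Prism}(\mathscr{X}_p/\AA)}$, I would run the same argument with $C^\bullet(\mathscr{X}_p)/pC^\bullet(\mathscr{X}_p)$ in place of $C^\bullet(\mathscr{X}_p)$: by~(\ref{CechFlat}) the terms of $C^\bullet(\mathscr{X}_p)$ are flat over $\AA$, so the terms of $C^\bullet(\mathscr{X}_p)/pC^\bullet(\mathscr{X}_p)$ are flat over $\AA/p\simeq k[[q-1]]$, on which $q-1$ is again a nonzerodivisor, and Proposition~\ref{prop:crys} descends to this quotient complex as well. Then the identical flatness-plus-long-exact-sequence reasoning gives $(g-1)M\subseteq (q-1)M$ for the mod $p$ cohomology.

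Next I would check that $M$ carries the remaining structure required by Definition~\ref{def:WachModule}. Finite generation: $C^\bullet(\mathscr{X}_p)$ is a perfect complex over the Noetherian ring $\AA$ (resp.\ over $\AA/p$ in the mod $p$ case), so its cohomology modules are finitely generated; continuity of the $\Gamma$--action and its compatibility with $\varphi$ follow from the fact that the action is induced by the strict $G_K$--action on $C^\bullet(\mathscr{X}_p)$ described in~(\ref{CechGaction}), which commutes with $\varphi$ by construction/functoriality of prismatic cohomology. Finally, condition~(\ref{def:WachModulePhi}), the existence of $\psi$ with $\varphi_{\lin}\circ\psi$ and $\psi\circ\varphi_{\lin}$ equal to multiplication by $([q]_p)^i$: this is precisely the statement that $\H^i_{\Prism}(\mathscr{X}_p/\AA)$ has ``$I$--height $\leq i$'', which is part of the general structure of prismatic cohomology of a smooth proper scheme (the Frobenius becomes an isomorphism after inverting $[p]_q$, with the relevant power of the Hodge--Tate ideal controlling the cokernel up to degree $i$); here one invokes the relevant statement from \cite[\S 15--16]{BhattScholze} on $q$--crystalline cohomology, noting $[p]_q=[q]_p$ up to the conventions in use, and it is inherited by the mod $p$ reduction.

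I expect the main obstacle to be the first step — getting the containment to survive the passage from the complex to its cohomology. The naive argument ``Proposition~\ref{prop:crys} holds termwise, hence holds on cohomology'' is \emph{false} in general, and the whole point is that the flatness of the terms of $C^\bullet(\mathscr{X}_p)$ (from~(\ref{CechFlat})) is what rescues it, via exactness of $0\to C^\bullet\xrightarrow{q-1}C^\bullet\to \overline{C}^\bullet\to 0$ and the resulting identification of $M/(q-1)M$ with a submodule (or at least a quotient-target) of $\H^i(\overline{C}^\bullet)$ on which $\Gamma$ provably acts trivially. Everything else — finite generation, the $\psi$ condition, compatibility and continuity of the $\Gamma$--action — is either standard prismatic input or formal.
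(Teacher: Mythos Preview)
Your proof is correct and takes a genuinely different route from the paper's. The paper argues directly at the level of cocycles: for $c\in Z^i$, Proposition~\ref{prop:crys} gives $(g-1)c=(q-1)c'$ with $c'\in C^i$, and then one checks $c'\in Z^i$ using that $q-1$ is a non--zero divisor on $C^{i+1}$ (via $(q-1)\partial c'=\partial((g-1)c)=(g-1)\partial c=0$). This yields $(g-1)Z^i\subseteq (q-1)Z^i$, which passes to cohomology without further issue. Your approach instead packages the same flatness input homologically: the short exact sequence $0\to C^\bullet\xrightarrow{q-1}C^\bullet\to \overline{C}^\bullet\to 0$ identifies $M/(q-1)M$ $\Gamma$--equivariantly with a submodule of $\H^i(\overline{C}^\bullet)$, on which $\Gamma$ acts trivially by Proposition~\ref{prop:crys}. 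Both arguments use exactly the same ingredients (termwise flatness so that $q-1$ is regular, and triviality of the action on $\overline{C}^\bullet$); the paper's is slightly more hands--on, yours slightly more conceptual.

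One small slip: you write that the long exact sequence gives a surjection $\H^i(\overline{C}^\bullet)\twoheadrightarrow M/(q-1)M$, and then that ``a surjection is all I need.'' There is no such surjection; what the long exact sequence gives is the injection $M/(q-1)M\hookrightarrow \H^i(\overline{C}^\bullet)$ that you note parenthetically. Fortunately that injection is also all you need (a $\Gamma$--submodule of a module with trivial $\Gamma$--action has trivial $\Gamma$--action), so the conclusion stands; just fix the direction in your write--up. The remaining points (finite generation, the height condition~(\ref{def:WachModulePhi}) via \cite[Theorem~1.8~(6)]{BhattScholze}, and the mod $p$ case) match the paper's treatment.
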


\begin{proof}
Let $C^{\bullet}$ be either the complex $C^{\bullet}(\mathscr{X})$ or $C^{\bullet}(\mathscr{X})/pC^{\bullet}(\mathscr{X}).$ It suffices to prove the condition $(g-1)Z^i \subseteq (q-1)Z^i$ where $Z^i$ denotes the degree $i$ cocycles in $C^{\bullet}$. In both cases, $q-1$ is a non--zero divisor on $C^j$ for every $j$, since $C^{j}(\mathscr{X})$ is $\AA$--flat and $p, q-1$ is a regular sequence on $\AA$.

Given $c \in Z^i$, by Proposition~\ref{prop:crys} we have $(g-1)c=(q-1)c'$ for some $c' \in C^i$, and it is enough to observe that $c'\in Z^i.$ This is indeed the case: If $\partial$ denotes the differential $C^{i}\rightarrow C^{i+1},$ we have
$$(q-1)\partial(c')=\partial((q-1)c')=\partial((g-1)c)=(g-1)\partial(c)=0,$$
and we may conclude that $\partial(c')=0$ since $q-1$ is a non--zero divisor on $C^{i+1}$.

This verifies condition~(\ref{def:WachModuleCr}) of Definition~\ref{def:WachModule}, while condition~(\ref{def:WachModulePhi}) is a general fact about prismatic cohomology \cite[Theorem~1.8 (6)]{BhattScholze}. It follows that $M$ is a Wach module of height $\leq i$.
\end{proof}

We also need some better control on the action when acting by elements of the subgroup $\Gamma_s\subseteq \Gamma$. In \cite{Ram1}, this was done using certain somewhat independent conditions $\Cr{s}$. In the context of Wach modules, we can obtain the control as a consequence of the  property (\ref{def:WachModuleCr}) of Definition~\ref{def:WachModule}.

\begin{lem}\label{lem:IterateCrys}
If $M$ is an $\AA/p$--module with a semilinear $\Gamma_{\delta_p}$--action satisfying $(\gamma-1)M \subseteq (q-1)M$, then the same is true of the module $M'=(q-1)M$.
\end{lem}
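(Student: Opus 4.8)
The plan is to show that for any $g \in \Gamma$, we have $(g-1)M' \subseteq (q-1)M'$, where $M' = (q-1)M$. First I would reduce to checking this on the topological generators of $\Gamma$ (or rather on $\widetilde\gamma$), using that $\Gamma$ is topologically generated by $\widetilde\gamma$ and that the condition $(g-1)M' \subseteq (q-1)M'$ is closed under composition (a standard cocycle-type identity: $(gh-1) = (g-1)h + (h-1)$, and $g$ maps $(q-1)M'$ into $(q-1)M'$ since $g(q-1) = q^{\chi(g)}-1 \in (q-1)\AA$) and under passage to limits once everything is $(q-1)$-adically separated. Actually, since the hypothesis is phrased for $\gamma$ specifically, I expect the cleanest route is to prove $(\gamma - 1)M' \subseteq (q-1)M'$ directly and note that the same bootstrapping argument as in the intended application handles the rest; but let me focus on the core computation.

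The core computation: take $x \in M' = (q-1)M$, so $x = (q-1)m$ for some $m \in M$. Then
$$(\gamma-1)x = \gamma((q-1)m) - (q-1)m = (\gamma(q)-1)\gamma(m) - (q-1)m = (q^{p+1}-1)\gamma(m) - (q-1)m,$$
using $\gamma(q) = q^{p+1}$. Now write $q^{p+1} - 1 = (q-1)u$ where $u = 1 + q + \cdots + q^p \in \AA$, so
$$(\gamma-1)x = (q-1)\bigl(u\,\gamma(m) - m\bigr) = (q-1)\bigl(u(\gamma(m) - m) + (u-1)m\bigr).$$
By hypothesis $(\gamma-1)m = \gamma(m)-m \in (q-1)M$, and $u - 1 = q + q^2 + \cdots + q^p \in (q-1)\AA$ (since each $q^j - q = q(q^{j-1}-1) \in (q-1)\AA$, and adding/subtracting the appropriate number of $q$'s... more simply, $u \equiv 1 + 1 + \cdots + 1 = p+1 \pmod{q-1}$, hmm that is not $1$). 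Let me be more careful: I need $u \cdot \gamma(m) - m \in (q-1)M$. Write $u\gamma(m) - m = u(\gamma(m)-m) + (u-1)m$; the first term lies in $(q-1)M$ since $u \in \AA$ and $\gamma(m)-m \in (q-1)M$; for the second term I need $(u-1)m \in (q-1)M$, i.e. $u \equiv 1 \pmod{q-1}$. But $u = 1+q+\cdots+q^p \equiv p+1 \pmod{q-1}$, which is a unit in $\AA$ but not $\equiv 1$. So this naive splitting does not immediately work, and this is the step I expect to be the main obstacle.

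To fix it, I would instead argue iteratively/inductively using the fact that the hypothesis gives control not just mod $(q-1)$ but allows one to bootstrap. Concretely: from $(\gamma-1)m = (q-1)m_1$ for some $m_1 \in M$, substitute back into $(\gamma-1)x = (q-1)(u\gamma(m) - m) = (q-1)\bigl(u((q-1)m_1 + m) - m\bigr) = (q-1)\bigl(u(q-1)m_1 + (u-1)m\bigr)$. The term $u(q-1)m_1 \in (q-1)M$ clearly. For $(u-1)m$: since $u - 1 = q + q^2 + \cdots + q^p$ and each $q^j = (1 + (q-1))^j = 1 + j(q-1) + \cdots$, we get $u - 1 = p + (q-1)w$ for some $w \in \AA$; hence $(u-1)m = pm + (q-1)wm$. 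Now if $M$ is $p$-torsion (the case of interest), $pm = 0$ and we are done; in general one needs $pm \in (q-1)M$, which is false for general $M$. So the statement as literally quoted is presumably intended (and used) for $p$-torsion Wach modules, or the author's proof exploits a structure I am not seeing — I would flag that $M' = (q-1)M$ inherits the condition cleanly in the $p$-torsion case, and in general one should track the $p$-adic correction term. The main obstacle, then, is precisely controlling the discrepancy between $u = [p]_q \cdot(\text{unit})$-type factors and $1$ modulo $(q-1)$, which is resolved either by the $p$-torsion hypothesis or by an inductive $(p,q-1)$-adic approximation argument combined with $(p,q-1)$-completeness of $M$.
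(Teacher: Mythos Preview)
Your core computation is correct and matches the paper's approach: both compute $(\gamma-1)((q-1)m)$ directly via a Leibniz-type expansion. The paper uses the slightly different splitting
\[
(\gamma-1)((q-1)m)=(\gamma(q)-q)\gamma(m)+(q-1)(\gamma(m)-m),
\]
and then writes $\gamma(q)-q=q(q-1)^p$, which immediately gives containment in $(q-1)^2M$. Your splitting via $u=(q^{p+1}-1)/(q-1)$ leads to the same endpoint.

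You are right to flag the $p$--torsion issue, and in fact you have caught an imprecision in the paper. The identity $\gamma(q)-q=q(q-1)^p$ used in the paper's proof is only valid modulo $p$ (in $\AA$ one has $\gamma(q)-q=q(q^p-1)=q(q-1)[p]_q$, and $[p]_q\equiv p\not\equiv 0\pmod{q-1}$). Indeed the lemma is \emph{false} as stated: for $M=\AA$ with the natural $\Gamma$--action one has $(\gamma-1)\AA\subseteq(q-1)\AA$, yet $(\gamma-1)(q-1)=q(q-1)[p]_q\notin(q-1)^2\AA$. So both your argument and the paper's require $pM=0$ (or at least $pM\subseteq(q-1)M$), which is the only case used in the subsequent proposition on $p$--torsion Wach modules. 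Your diagnosis of the obstacle, the term $pm$ coming from $u\equiv p+1\pmod{q-1}$, is exactly the point.

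The preamble about reducing to topological generators is unnecessary here: the lemma as stated and as used concerns only the single element $\gamma$, and the application is to iterate $(\gamma-1)$, not to handle arbitrary $g\in\Gamma$.
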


\begin{proof}
For a given $m \in M$, note that 
\begin{align*}(\gamma-1)((q-1)m)&=(\gamma(q)-1)\gamma(m)-(q-1)m\\
&=(\gamma(q)-1)\gamma(m)-(q-1)\gamma(m)+(q-1)\gamma(m)-(q-1)m \\
&=(\gamma(q)-q)\gamma(m)+(q-1)(\gamma(m)-m)\\
&=q(q-1)^{p^{1+\delta_p}}\gamma(m)+(q-1)^2m'
\end{align*}
for some $m' \in M$, where on the last line,  $pM=0$ is used in order to replace $(q^{p^{1+\delta_p}}-1)$ with $(q-1)^{p^{1+\delta_p}}$. Thus, we have $(\gamma-1)((q-1)M) \subseteq (q-1)^2M,$ as desired.
\end{proof}

\begin{prop}\label{prop:CrystallineConditionAnalogue}
For a $p$--torsion Wach module $M$, we have $$\forall g \in \Gamma_{s+\delta_p}: (g-1)M \subseteq (q-1)^{p^s}M\;. $$
\end{prop}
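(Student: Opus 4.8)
The plan is to prove the statement by induction on $s$, using Lemma~\ref{lem:IterateCrys} as the engine for passing from the ``$(q-1)$--level'' estimate to higher powers of $(q-1)$, together with the observation that $\Gamma_s$ is topologically generated by $\gamma^{p^{s-1}}$ (recall $\gamma$ corresponds to $1+p$ under the cyclotomic character, so $\gamma^{p^{s-1}}$ corresponds to $(1+p)^{p^{s-1}} \in 1+p^s\ZZ_p$, a topological generator of $\Gamma_s \simeq 1+p^s\ZZ_p$). The base case $s=0$ (or $s=1$) is exactly condition~(\ref{def:WachModuleCr}) of Definition~\ref{def:WachModule}, applied either to all of $\Gamma$ or, after noting $\gamma \in \Gamma_1$ generates it topologically, to $\Gamma_1$.

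First I would establish the key estimate for the generator: if $M$ satisfies $(\gamma-1)M \subseteq (q-1)M$, then $(\gamma^{p^{s-1}}-1)M \subseteq (q-1)^{p^s}M$. The idea is to iterate. Write $\gamma^{p} - 1 = (\gamma-1)(\gamma^{p-1} + \dots + 1)$ as operators; more usefully, apply Lemma~\ref{lem:IterateCrys} repeatedly to the chain of submodules $M \supseteq (q-1)M \supseteq (q-1)^2 M \supseteq \dots$, each of which inherits a semilinear $\Gamma$--action with $(\gamma-1)$ mapping it into $(q-1)$ times itself. Combining this with a binomial-type expansion of $\gamma^{p} - 1 = ((\gamma-1)+1)^p - 1 = \sum_{j=1}^p \binom{p}{j}(\gamma-1)^j$ and the fact that $p \in (q-1)^{p-1}\AA$ up to a unit when working mod $p$—wait, here $p=0$ on $M$, so the terms $\binom{p}{j}(\gamma-1)^j$ for $1 \le j \le p-1$ vanish and only $(\gamma-1)^p$ survives. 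Thus on a $p$--torsion module, $\gamma^p - 1 = (\gamma-1)^p$ as operators. Applying $(\gamma - 1)$ $p$ times and using Lemma~\ref{lem:IterateCrys} to track the $(q-1)$--adic depth at each stage gives $(\gamma^p - 1)M = (\gamma-1)^p M \subseteq (q-1)^p M$; iterating the same argument $s-1$ further times (using that $(q-1)^{p^{j}}M$ again satisfies the hypothesis of Lemma~\ref{lem:IterateCrys} relative to $\gamma^{p^{j}}$) yields $(\gamma^{p^{s-1}}-1)M \subseteq (q-1)^{p^s}M$.

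Next I would upgrade from the topological generator to all of $\Gamma_s$. Any $g \in \Gamma_s$ can be written as $g = \lim_n \gamma^{p^{s-1} a_n}$ for suitable $a_n \in \ZZ$ (by density of the cyclic subgroup generated by $\gamma^{p^{s-1}}$), or more directly $g = (\gamma^{p^{s-1}})^c$ for a $p$--adic integer $c$, interpreting the power via continuity of the action. Since $\gamma^{p^{s-1} c} - 1$ expands as a ($p$--adically convergent) combination of powers of $(\gamma^{p^{s-1}} - 1)$, and each such power lands in $(q-1)^{p^s}M$ (which is closed, as $M$ is finitely generated over the Noetherian complete local ring $\AA/p$), continuity of the $\Gamma$--action together with $(q-1)$--adic completeness of $M$ gives $(g-1)M \subseteq (q-1)^{p^s}M$ for every $g \in \Gamma_s$.

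The main obstacle I anticipate is handling the passage through powers of $\gamma$ cleanly: the identity $\gamma^p - 1 = (\gamma - 1)^p$ on $p$--torsion modules is not quite enough by itself, because after applying Lemma~\ref{lem:IterateCrys} the ``base'' changes from $\gamma$ to $\gamma^p$ (since on $(q-1)M$ the relevant generator computation in the lemma's proof involves $\gamma(q) - q = q((q-1)^p \cdots)$, genuinely using $\gamma$ and not $\gamma^p$), so one must be careful that at the $j$--th stage the operator whose $p$--th power we take is indeed the generator of $\Gamma_{j}$ acting on the submodule $(q-1)^{p^{j-1}}M$. I would organize this as a clean induction: the statement ``for every $\AA$--module $N$ with semilinear $\Gamma$--action such that $(\gamma-1)N \subseteq (q-1)N$, and every $p$--torsion quotient situation, one has $(\gamma^{p^{s-1}} - 1)N \subseteq (q-1)^{p^s}N$'' is proved by induction on $s$, feeding $N' = (q-1)^{p^{s-1}}N$ and the element $\gamma^{p^{s-1}}$ back into the $s=1$ case. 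The bookkeeping of which power of $(q-1)$ is gained at each application of Lemma~\ref{lem:IterateCrys}—one gains a factor of $(q-1)$ per application of $(\gamma-1)$, but the lemma as stated gains a full $(q-1)$ from $(q-1)M$ to $(q-1)^2M$, so $p$ applications give $(q-1)^p$—is the part that needs the most care to state precisely.
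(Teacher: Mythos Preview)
Your approach is essentially identical to the paper's: reduce to a topological generator of $\Gamma_s$, use the characteristic--$p$ identity $\gamma^{p^k}-1=(\gamma-1)^{p^k}$ on the $p$--torsion module $M$, and then apply Lemma~\ref{lem:IterateCrys} repeatedly to obtain $(\gamma-1)^k M \subseteq (q-1)^k M$ for all $k\geq 0$. Your extra care in passing from the generator to all of $\Gamma_s$ by continuity (using that $(q-1)^{p^s}M$ is closed in the finitely generated $k[[q-1]]$--module $M$) is a point the paper leaves implicit.

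There is, however, a genuine exponent mismatch in your write--up. As you correctly note, for $s\geq 1$ the group $\Gamma_s\simeq 1+p^s\ZZ_p$ is topologically generated by $\gamma^{p^{s-1}}$. But then the iteration only yields
\[
(\gamma^{p^{s-1}}-1)M=(\gamma-1)^{p^{s-1}}M\subseteq (q-1)^{p^{s-1}}M,
\]
one power of $p$ short of the asserted $(q-1)^{p^s}M$. Your proposed further ``iteration relative to $\gamma^{p^j}$'' does not close this gap: replacing $\gamma$ by $\gamma^{p^j}$ and rerunning the argument again raises both exponents by the same factor of $p$, so one always lands on containments of the form $(\gamma^{p^{t}}-1)M\subseteq (q-1)^{p^{t}}M$ and never on $(\gamma^{p^{s-1}}-1)M\subseteq (q-1)^{p^{s}}M$. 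The paper's own proof matches the exponents by taking $\gamma^{p^s}$ as the generator of $\Gamma_s$; since your identification of the generator is the standard one, what you have in effect uncovered is an off--by--one in the indexing rather than a defect in the strategy. For the downstream application (Proposition~\ref{prop:GsFormal}) this shifts the condition on $s$ by one, which is harmless for the final bound after choosing $s$ large enough.
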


\begin{proof} 
When $s=0$, this follows from part (\ref{def:WachModuleCr}) of Definition~\ref{def:WachModule}, so we may assume $s\geq 1$. Since $\Gamma_{s+\delta_p}$ is topologically generated by $\gamma^{p^s},$ it is enough to show the assertion for $\gamma^{p^s}$. Observe that $\gamma^{p^s}-1=(\gamma-1)^{p^s}$ as endomorphisms of $M$ since $pM=0$. Thus, we need to verify
$$(\gamma-1)^{p^s}M\subseteq (q-1)^{p^s}M.$$
But this follows by repeated use of Lemma~\ref{lem:IterateCrys}.
\end{proof}

Finally, let us discuss Galois representations attached to Wach modules in the sense of Definition~\ref{def:WachModule}.

\begin{deff}
The $G_K$--module associated with a $p$--torsion Wach module $M$ is given by $$T(M)=(M \otimes_{\AA}\CC_K^{\flat})^{\varphi=1},$$
where the map $\AA \rightarrow \oh_{\CC_K^{\flat}}\rightarrow \CC_K^{\flat}$ is given by sending $q$ to $\varepsilon^{1/p}$.
\end{deff}

In the geometric setting, the representation obtained this way is the appropriate \'{e}tale cohomology.

\begin{prop}\label{prop:EtaleRealizationGeometric}
\begin{enumerate}[(1)]
\item{Let T be a $\ZZ/p\ZZ[G_K]$-module of the form $L/pL$ for a $G_K$--stable lattice $L$ in a crystalline representation whose Hodge--Tate weights are in the range $[-i, 0]$. Then $T=T(M)$ for a $p$--torsion Wach module $M$.}\label{WachModForLatticeQuotient}
\item{For a proper smooth $p$--adic formal scheme $\mathscr{X}$ over $W(k)$, we have the following:
\begin{enumerate}\item{$T(\overline{\H^i_{\Prism}(\mathscr{X}_p/\AA)})=\H^i_{\et}(X_{\CC_K}, \ZZ/p\ZZ)\, ,$}\label{FpCohomology} 
\item{$T(\H^i_{\Prism}(\mathscr{X}_p/\AA)/p\H^i_{\Prism}(\mathscr{X}_p/\AA))=\H^i_{\et}(X_{\CC_K}, \ZZ_p)/p\H^i_{\et}(X_{\CC_K}, \ZZ_p)\,.$}\label{EtaleModpCohomology}
\end{enumerate} }\label{WachModForCohomology}
\end{enumerate} 

\end{prop}

\begin{proof}
Let us start with (\ref{WachModForCohomology}). The claim (\ref{FpCohomology}) is proved the same way as \cite[Lemma 2.1.6]{KisinEssDim} in the setting of Breuil--Kisin cohomology. First, note that $M \mapsto T(M)$ factors as the base chage $M \rightarrow M\otimes_{\AA}\ainf$ to $\ainf$--cohomology followed by the analogous functor $M_{\inf}\mapsto (M_{\inf}\otimes_{\ainf}\CC_K^{\flat})^{\varphi=1}$ of mod $p$ Breuil--Kisin--Fargues modules. Using \cite[Theorem~1.8]{BhattScholze}, we obtain a long exact sequence 
\begin{center}
\begin{tikzcd}
\cdots \H^i_{\et}(X_{\CC_K}, \ZZ/p\ZZ) \ar[r] &  \overline{\H^i_{\Prism}(\mathscr{X}_p/\AA)}\otimes_{\AA}\CC^{\flat}_K \ar[r, "1-\varphi"] & \overline{\H^i_{\Prism}(\mathscr{X}_p/\AA)}\otimes_{\AA}\CC^{\flat}_K \ar[r] & \H^{i+1}_{\et}(X_{\CC_K}, \ZZ/p\ZZ)  \cdots, 
\end{tikzcd}
\end{center}
so it is enough to observe that the map $1-\varphi$ is surjective (for every $i$). Since $[p]_q$ becomes invertible in $\CC_K^{\flat}$, $ \overline{\H^i_{\Prism}(\mathscr{X}_p/\AA)}\otimes_{\AA}\CC^{\flat}_K $ is in fact an \'{e}tale $\varphi$--module, hence of the form $T\otimes \CC_K^{\flat}$ for some finite $\ZZ/p\ZZ$--module $T$, with $\varphi$ given by the Frobenius on $\CC_K^{\flat}$. It follows that $1-\varphi$ is surjective. 

Repeating a d\'{e}vissage version of the argument (or simply invoking \cite[Theorem 1.1 (vii)]{MorrowNotes} and \cite[\S 17]{BhattScholze})) shows that $T(\H^i_{\Prism}(\mathscr{X}_p/\AA))=\H^i_{\et}(X_{\CC_K}, \ZZ_p),$ where we extend the definition of $T$ to all Wach modules by the formula $T(M)=(M\otimes_{\AA}W(\CC_K^{\flat}))^{\varphi=1}$. The second claim now follows from the fact that $T$ takes the right--exact sequence of Wach modules
\begin{center}
\begin{tikzcd}
\H^i_{\Prism}(\mathscr{X}_p/\AA) \ar[r, "p"] & \H^i_{\Prism}(\mathscr{X}_p/\AA) \ar[r] & \H^i_{\Prism}(\mathscr{X}_p/\AA)/p\H^i_{\Prism}(\mathscr{X}_p/\AA)  \ar[r] & 0 
\end{tikzcd}
\end{center}
to a right--exact sequence again. This proves (\ref{EtaleModpCohomology}).

To prove (\ref{WachModForLatticeQuotient}), one uses the result of Berger \cite{BergerWachM} that the crystalline lattice $L$ is of the form $T(M_0)$ for a  Wach module $M_0$ of height $\leq i$, finite free as an $\AA$--module. Proceeding the same way as in the proof of (\ref{EtaleModpCohomology}), it follows that $T=T(M)$ for the mod $p$ Wach module $M=M_0/pM_0$.  
\end{proof}

\section{Ramification bound}\label{sec:RamBound}

We now proceed with the proof of the ramification bound. We follow the strategy used in \cite{Ram1} and thus, ultimately the strategy of \cite{CarusoLiu}, only adapting its use to the case of Wach modules and the tower of extensions $\{K(\mu_{p^s})\}_s$ rather than $\{K(\pi^{1/{p^s}})\}_s$. Given the similarity in setup, our arguments also resemble those used in \cite[\S 7.3]{EmertonGee2} where the authors adapt the strategy of \cite{CarusoLiu} to the case of certain $\varphi$--modules and the cyclotomic $\ZZ_p$--extension $K_{cyc}/K$, corresponding to the quotient $\mathrm{Gal}(K(\mu_{p^{\infty}})/K)=\ZZ_p^{\times}\twoheadrightarrow \ZZ_p^{\times}/(\ZZ_p^{\times})_{\mathrm{tors}}\simeq \ZZ_p$. 

Let us fix a $p$--torsion Wach module of height $\leq i$, denoted by $M$. In the geometric situation,  we consider $M$ of the form $\overline{\H^i_{\Prism}(\mathscr{X}_p/\AA)}$ or $\H^i_{\Prism}(\mathscr{X}_p/\AA)/p\H^i_{\Prism}(\mathscr{X}_p/\AA)$ for a smooth proper $p$-adic formal scheme $\mathscr{X}/\oh_K$. By Corollary~\ref{cor:WachOnCohomology} and Proposition~\ref{prop:EtaleRealizationGeometric}, these are $p$-torsion Wach modules whose associated Galois representations are $\H^i_{\et}(X_{\CC_K}, \ZZ/p\ZZ)$ and $\H^i_{\et}(X_{\CC_K}, \ZZ_p)/p\H^i_{\et}(X_{\CC_K}, \ZZ_p)$, respectively.

Our aim is to provide a bound on $\mu_{L/K}$ where $L$ is the splitting field of $T(M)$, that is, $L=\overline{K}^{\ker \rho}$ where $\rho: G_K \rightarrow \mathrm{Aut}(T(M))$ is the Galois representation. Noting that this $L$ does not change, we may replace $T(M)$ by its dual, which is related to $M$ by $$T(M)^{\vee}=T^*(M):=\mathrm{Hom}_{\AA, \varphi}(M, \oh_{\CC_K^{\flat}}).$$ 
Moreover, the functor $M\mapsto T^*(M)$ clearly depends on $M$ only up to $(q-1)$--power--torsion; thus, replacing $M$ by its quotient modulo $(q-1)$--power--torsion, we may assume that $M$ is free as a $k[[q-1]]$--module. From now on, let us denote $T^*(M)$ by $T$ for short.

Let us denote by $v^{\flat}$ the tilt of the additive valuation $v_K$ on $\oh_{\mathbb{C}_K}$, i.e. $v^{\flat}(x)=v_K(x^{\sharp})$ where $v_K$ is the valuation on $\oh_{\CC_K}$ determined by $v_K(p)=1$ and where $(-)^{\sharp}: \oh_{\CC_K}^{\flat}\rightarrow \oh_{\CC_K}$ is the multiplicative map $\mathrm{pr}_0: \oh_{\CC_K}^{\flat} = \varprojlim_{x\to x^p} \oh_{\CC_K} \rightarrow \oh_{\CC_K}$. For a real number $c>0$, we denote by $\mathfrak{a}^{>c}$ ($\mathfrak{a}^{\geq c},$ resp.) the ideal of all elements $x \in \oh_{\CC_K}^{\flat}$ with $v^{\flat}(x)>c$ ($v^{\flat}(x)\geq c$, resp.). Clearly every such ideal is stable under the Frobenius map and under the $G_K$--action.

\begin{deff}\label{def:Jc}
For a real number $c>0$, let $J_c$ denote the $G_K$--module
$$J_c=\mathrm{Hom}_{\AA, \varphi}(M, \oh_{\CC_K^{\flat}}/\aa^{>c}).$$ 
The Galois action on $J_c$ is given by the usual formula
$$g(f)(x)=g(f(g^{-1}x)), \;\; g \in G_K, \;\; f \in J_c, \;\; x \in M.$$
We further set $\rho_c:T\rightarrow J_c$ to be the $G_K$--equivariant map induced by the projection $\oh_{\CC_K^{\flat}}\rightarrow \oh_{\CC_K^{\flat}}/\aa^{>c}$. 
Similarly, when $c \geq d >0$, we denote the natural $G_K$--equivariant map $J_c \rightarrow J_d$ by $\rho_{c, d}$, and denote the image of this map by $I_{c, d}$. 
\end{deff}

From now on, let us fix the numbers $$b=\frac{i}{p-1},\;\; a=\frac{pi}{p-1}=pb.$$
The next proposition states that $J_a, J_b$ recover $T$ as a $G_K$--representation completely. It is an analogue of \cite[Proposition~2.3.3]{CarusoLiu} in our context.

\begin{prop}\label{prop:Approximation1}
The map $\rho_b: T \rightarrow J_b$ is injective, and the image agrees with $I_{a, b}$.
\end{prop}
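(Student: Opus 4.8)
The plan is to follow the template of \cite[Proposition~2.3.3]{CarusoLiu}, adapting it to the Wach setting. The key point is that $J_c = \mathrm{Hom}_{\AA,\varphi}(M, \oh_{\CC_K^{\flat}}/\aa^{>c})$ is computed from $M$, which is free over $k[[q-1]]$, and that the Frobenius structure on $M$ (condition (\ref{def:WachModulePhi}) of Definition~\ref{def:WachModule}, with the element $([q]_p)^i$, whose image in $\oh_{\CC_K^{\flat}}$ has valuation controlled by $b = i/(p-1)$) forces $\varphi$-equivariant homomorphisms into a truncated quotient to lift, once the truncation level $c$ exceeds $b$. Concretely, if $\varphi_{\lin}\circ\psi = \psi\circ\varphi_{\lin} = ([q]_p)^i$, then the image $\xi$ of $[p]_q$ has $v^\flat(\xi) = a = pi/(p-1)$ after the relevant normalization — wait, more precisely $[q]_p$ maps to an element whose valuation is $b$ — and this is exactly why the two thresholds $a$ and $b$ appear. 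So I would first unwind these valuations carefully: compute $v^\flat$ of the image of $q-1$ and of $[q]_p = [p]_{q^{\,}}$... i.e. of $[\varepsilon^{1/p}]-1$ raised appropriately, to pin down that $\varphi$ stretches valuations in the way needed.

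**Injectivity of $\rho_b$.** Suppose $f \in T = T^*(M) = \mathrm{Hom}_{\AA,\varphi}(M,\oh_{\CC_K^\flat})$ maps into $\aa^{>b}$, i.e. $f(M)\subseteq \aa^{>b}$. I want to conclude $f = 0$. Here I would use the Frobenius-compatibility: $f\circ\varphi_M = \varphi\circ f$ where $\varphi$ on the target is the (absolute) Frobenius on $\oh_{\CC_K^\flat}$. Feeding in the map $\psi$ from condition (\ref{def:WachModulePhi}), one gets that $f$ composed with multiplication by $([q]_p)^i$ factors through $\varphi$, and since $\varphi$ on $\oh_{\CC_K^\flat}$ multiplies $v^\flat$ by $p$, an element of $f(M)$ of valuation $> b$ becomes, after dividing by $([q]_p)^i$ (valuation $i \cdot v^\flat([q]_p)$, which should come out to $b$ or $a$ depending on normalization — this is the calculation to get right) and applying the inverse of Frobenius, an element of strictly larger valuation. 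Iterating this bootstrap drives $v^\flat(f(m))$ to $+\infty$ for every $m$, hence $f(m)=0$. This is the standard "the only $\varphi$-equivariant map that is small is zero" argument; the content is entirely in the valuation estimate.

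**Image equals $I_{a,b}$.** For the harder inclusion I need: every $\varphi$-equivariant $f\colon M \to \oh_{\CC_K^\flat}/\aa^{>b}$ lifts to a $\varphi$-equivariant $\widetilde f\colon M \to \oh_{\CC_K^\flat}/\aa^{>a}$ (surjectivity of $\rho_{a,b}$ onto ... rather, that $\rho_{a,b}(J_a) = \rho_b(T)$), and conversely any class in $J_b$ hit from $T$ lies in that image. The key mechanism: given $f$ defined mod $\aa^{>b}$, pick any $\AA$-linear (not necessarily $\varphi$-equivariant) lift $f_0$ to $\oh_{\CC_K^\flat}/\aa^{>a}$; the obstruction $\varphi\circ f_0 - f_0\circ\varphi_M$ takes values in $\aa^{>b}/\aa^{>a}$, and one corrects it using $\psi$ and the fact that multiplication by $([q]_p)^i$ is invertible on $\aa^{>b}/\aa^{>a}$ up to the Frobenius twist — this is where freeness of $M$ over $k[[q-1]]$ and the precise gap between $a$ and $b$ (namely $a - b = b$, with $pb = a$) are used, so that the geometric-series correction converges. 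Then a similar (easier) argument lifts all the way from $\aa^{>a}$ to $\oh_{\CC_K^\flat}$ itself to identify the lift with an element of $T$, giving $\rho_b(T) = I_{a,b}$.

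**Main obstacle.** I expect the delicate point to be the successive-approximation/correction step for surjectivity onto $I_{a,b}$: making precise that the operator "multiply by $([q]_p)^i$ then twist by $\varphi$" is invertible on the relevant graded pieces $\aa^{>c}/\aa^{>c'}$ and that the resulting iteration converges $p$-adically (equivalently in the $v^\flat$-adic topology). This requires knowing $v^\flat$ of the image of $[q]_p$ exactly and checking that at each stage the error improves by a definite amount, so that after finitely many steps (because we work with the truncated module $\oh_{\CC_K^\flat}/\aa^{>a}$, the process must terminate). The injectivity of $\rho_b$ and the "only small $\varphi$-map is zero" lemma are routine by comparison; the bookkeeping of which interval the obstruction lives in, and confirming it matches $[b,a]$, is the crux.
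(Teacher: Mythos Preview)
Your injectivity argument is correct and is equivalent to the uniqueness half of the paper's contraction argument. The problem is in your treatment of the image. You describe a mechanism that starts from an arbitrary $f\in J_b$, picks a set-theoretic lift $f_0$ to $\oh_{\CC_K^\flat}/\aa^{>a}$, and tries to correct it to be $\varphi$-equivariant. That would prove $\rho_{a,b}$ is surjective, i.e.\ $I_{a,b}=J_b$, which is \emph{not} what is claimed and is false in general. Concretely, solving $h\circ\varphi_M=\mathrm{obs}$ forces you to divide by the image of $([p]_q)^i$, which has $v^\flat=i$; since $\mathrm{obs}$ only lies in $\aa^{>b}$, the resulting correction has $v^\flat>b-i=-i(p-2)/(p-1)$, so for $p\geq 3$ it need not even be integral, let alone lie in $\aa^{>b}$.

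The paper's argument (and the correct one) starts instead from $f\in J_a$: lift $f(\underline{e})$ arbitrarily to $\underline{x}\in(\oh_{\CC_K^\flat})^{\oplus d}$, so that $\varphi(\underline{x})-\underline{x}F$ lies in $(\aa^{\geq c})^{\oplus d}$ for some $c>a$, and seek a \emph{unique} $\underline{y}\in(\aa^{>b})^{\oplus d}$ with $\varphi(\underline{x}+\underline{y})=(\underline{x}+\underline{y})F$. Rewriting this as a fixed-point equation $\underline{y}=C(\underline{y})$ and checking that $C$ is a contraction on $(\aa^{\geq c/p})^{\oplus d}$ (each iteration improves the valuation by at least $(p-1)h$ for some $h>0$), the Banach fixed-point theorem yields existence and uniqueness at once. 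Note the resulting element of $T$ agrees with $f$ only modulo $\aa^{>b}$, \emph{not} modulo $\aa^{>a}$. In other words, the step you dismiss as ``easier'' (lifting from $J_a$ to $T$) is the entire content; the preliminary $J_b\to J_a$ step should simply be deleted, since the hypothesis ``$g\in I_{a,b}$'' already hands you an $f\in J_a$.
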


Before proceeding to the proof, let us fix auxiliary data for $M$ and notation that will be useful.

\begin{nott}\label{not:FandV}
Let us fix a free basis $e_1, e_2, \dots e_d$ of $M$. Let $F \in \Mat_{d \times d}(\AA/p)$ be the matrix satisfying $$(\varphi(e_1), \varphi(e_2), \dots, \varphi(e_d))=(e_1, e_2, \dots, e_d)F.$$ Since $M$ is of height $\leq i$, the submodule of $M$ generated by $\varphi(M)$ contains $([p]_q)^iM=(q-1)^{(p-1)i}M$. Thus, there is a matrix $V\in \Mat_{d \times d}(\AA/p)$ such that $FV=(q-1)^{(p-1)i}\mathrm{Id}$. 
\end{nott}

To simplify formulas, we use underlined notation, e.g. $\underline{x}$, to refer to a length $d$ vector $(x_1, x_2, \dots, x_d)$. Thus, for example, $\underline{e}$ refers to the ordered basis $(e_1, e_2, \dots, e_d)$ of $M$. If an operation $f$ makes sense for members of an ordered tuple $\underline{x}$, we use $f(\underline{x})$ to refer to the vector where $f$ is applied term--by--term. Exception to this rule is when $f=v$ is a valuation, in which case $v(\underline{x})$ is a shorthand for $\min_{i} v(x_i)$. Applying this convention to $v=v^{\flat},$ the formula $d(\underline{x}, \underline{y})=p^{-v^{\flat}(\underline{x}-\underline{y})}$ makes $\oh_{\CC_K^{\flat}}^{\oplus d},$ as well as $(\mathfrak{a}^{\geq t})^{\oplus d}$ for every $t \geq 0$, into a complete metric space. 

Recall that a \textit{contraction} on a metric space $X$ is a map $C: X \to X$ with the property that $d(C(x), C(y))\leq \gamma d(x, y)$ for all $x, y \in X$, where $\gamma \in (0, 1)$ is a fixed constant (independent of $x, y$). In particular, a contraction on $(\mathfrak{a}^{\geq t})^{\oplus d}$ is a map $C: (\mathfrak{a}^{\geq t})^{\oplus d}\to (\mathfrak{a}^{\geq t})^{\oplus d}$ such that $v^{\flat}(C(\underline{x})-C(\underline{y}))\geq v^{\flat}(\underline{x}-\underline{y})+k$ for some $k>0$ (independent of $\underline{x}, \underline{y}$).  

\begin{proof}[Proof of Proposition~\ref{prop:Approximation1}]
Clearly $I_{a, b}$ contains $\mathrm{Im}\, \rho_b$, so the second part of the claim amounts to the converse inclusion. Fix $f:M\rightarrow \oh_{\CC_K^{\flat}}/\aa^{>a}$ in $J_a$. We claim that there exists a \textit{unique} $g:M\rightarrow \oh_{\CC_K^{\flat}}$ in $T$ such that $f \equiv g \pmod{\aa^{>b}}$. This shows both that $\rho_b$ is injective and that its image is $I_{a, b}$.

Consider an arbitrary lift $\underline{x}\in (\oh_{\CC}^{\flat})^{\oplus d}$ of the vector $f(\underline{e})$. By compatibility of $f$ with $\varphi$, we have
$$\underline{x}F\equiv \varphi(\underline{x}) \pmod{\aa^{>a}}$$
We may therefore write $\varphi(\underline{x})-\underline{x}F=\underline{r}$ for some $\underline{r} \in (\aa^{\geq c_0})^{\oplus d}$ where $c_0>a$. The existence and uniqueness of $g$ as above now amounts to showing that $\underline{x}'=\underline{x}+\underline{y}$ satisfies $\underline{x'}F= \varphi(\underline{x'})$ for a unique choice of $\underline{y}\in(\aa^{>b})^{\oplus d}$.  
To determine $\underline{y}$, let us compute:
\begin{align*}\underline{x}F+\underline{y}F&=\underline{x'}F=\varphi(\underline{x'})=\varphi(\underline{x})+\varphi(\underline{y})\\
\underline{y}F&=\varphi(\underline{x})-\underline{x}F+\varphi(\underline{y})=\underline{r}+\varphi(\underline{y}).
\end{align*}
Applying $V$ on the left, we arrive at 
\begin{align*}
(\varepsilon - 1)^{\frac{(p-1)i}{p}}\underline{y}&=\underline{r}V+\varphi(\underline{y})V\\
\underline{y}&=(\varepsilon - 1)^{-\frac{(p-1)i}{p}}(\underline{r}V+\varphi(\underline{y})V). \stepcounter{equation}\tag{\theequation}\label{eqn:FixedPt}
\end{align*}
We aim to show that the map $C:\underline{y}\mapsto (\varepsilon - 1)^{-\frac{(p-1)i}{p}}(\underline{r}V+\varphi(\underline{y})V)$ has a unique fixed point $\underline{y}$ in $(\aa^{>b})^{\oplus d}$. Consider any $c\in (a, c_0)$, so that $\underline{r} \in (\mathfrak{a}^{\geq c})^{\oplus d}$. Then it is easy to check that $C$ takes $(\aa^{\geq(c/p)})^{\oplus d}$ to $(\aa^{\geq(c/p)})^{\oplus d}$. Indeed, assuming $\underline{y} \in (\aa^{\geq(c/p)})^{\oplus d},$ we have
$$v^{\flat}(C(\underline{y}))\geq \mathrm{min}\left(v^{\flat}(\underline{r}), v^{\flat}(\varphi(\underline{y}))\right) +v^{\flat}\left((\varepsilon - 1)^{-\frac{(p-1)i}{p}}\right)\geq c-a\cdot\frac{p-1}{p}\geq c\left(1-\frac{p-1}{p}\right)=\frac{c}{p}.$$ Moreover, $C$ is a contraction on $(\aa^{\geq(c/p)})^{\oplus d}$: if $v^{\flat}(\underline{y}_1-\underline{y}_2) \geq t\geq c/p$, then 
$$v^{\flat}(C(\underline{y}_1)-C(\underline{y}_2))=v^{\flat}((\varepsilon - 1)^{-\frac{(p-1)i}{p}}\varphi(\underline{y_1}-\underline{y_2})V)\geq pt-i \geq t+h(p-1)$$ 
where $h=(c-a)/p>0$. By the Banach fixed-point theorem \cite[Th\'{e}or\`{e}me 6]{Banach}, there is a unique $\underline{y} \in (\mathfrak{a}^{\geq c/p})^{\oplus d}$ fixed by $C$. Moreover, changing $c$ to $c'$ with $c>c'>a$ does not change $\underline{y}$ since $(\mathfrak{a}^{\geq c/p})^{\oplus d} \subseteq (\mathfrak{a}^{\geq c'/p})^{\oplus d}$. Since $(\mathfrak{a}^{>b})^{\oplus d}$ is the directed union of $(\mathfrak{a}^{\geq c/p})^{\oplus d}$ over all $c>a$ as above, it follows that this $\underline{y}$ is the unique fixed point of $C$ on $(\mathfrak{a}^{>b})^{\oplus d}$. Thus, (\ref{eqn:FixedPt}) has a unique solution in $(\mathfrak{a}^{>b})^{\oplus d}$, which finishes the proof.
\end{proof}

For $c>0$ and an integer $s \geq 0$, we say that the action on $J_c$ is \textit{$G_s$--formal} if for all $g \in G_s$, $f \in J_c$ and all $x \in M,$ we have $g(f)(x)=g(f(x))$. Equivalently, $f:M\rightarrow \oh_{\CC_K^{\flat}}/\aa^{>c}$ is invariant for the action of $G_s$ on the source (``formal'' here refers to the fact that one may disregard the action of $G_s$ on $M$ and still get the correct action on $J_c$). The following result is crucial for establishing the bounds.

\begin{prop}\label{prop:GsFormal}
The action on $J_c$ is $G_s$--formal when $p^{s-\delta_p}>c(p-1)$. In particular, the action on $J_b$ is $G_s$--formal when $p^{s-\delta_p}>i$.
\end{prop}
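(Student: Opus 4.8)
The plan is to show that for $g \in G_s$ and $f \in J_c$, the two expressions $g(f)(x)$ and $g(f(x))$ agree in $\oh_{\CC_K^{\flat}}/\aa^{>c}$ for every $x \in M$, and by $\AA$-linearity and $\varphi$-compatibility it suffices to check this on the fixed basis $\underline{e}$. Unwinding the definition of the Galois action on $J_c$, the difference is controlled by $(g-1)\underline{e}$, or more precisely by the action of $g-1$ on $M$: writing $g(f)(e_j) = g(f(g^{-1}e_j))$ and comparing with $g(f(e_j))$, the discrepancy comes from $g^{-1}e_j - e_j \in (g-1)M$ (using that $g$ has finite order or passing through the semilinearity carefully). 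So the key point is to combine two facts: first, the Wach-module property gives $(g-1)M \subseteq (q-1)^{p^s}M$ for $g \in \Gamma_s$ — but here $g \in G_s$, so I first need to observe that the $G_K$-action on $J_c$ factors through $\Gamma$ modulo $\aa^{>c}$, or rather reduce from $G_s$ to $\Gamma_s$; and second, that $f$, being $\varphi$-compatible with values in $\oh_{\CC_K^{\flat}}/\aa^{>c}$, sends $(q-1)^{p^s}M$ into a sufficiently deep ideal.

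First I would make precise how $q-1 \in \AA$ acts: under $\AA \to \oh_{\CC_K^{\flat}}$ sending $q \mapsto \varepsilon^{1/p}$, the element $q-1$ maps to $\varepsilon^{1/p}-1$, which has $v^\flat$-valuation $v^\flat(\varepsilon^{1/p}-1) = \tfrac{1}{p}\cdot v^\flat(\varepsilon - 1) = \tfrac{1}{p}\cdot\tfrac{p}{p-1} = \tfrac{1}{p-1}$. Hence $(q-1)^{p^s}$ maps to an element of valuation $p^s/(p-1)$. Then for any $\AA$-linear $\varphi$-compatible $f \colon M \to \oh_{\CC_K^{\flat}}/\aa^{>c}$ and any $m \in (q-1)^{p^s}M$, we get $f(m) \in \aa^{\geq p^s/(p-1)} \cdot (\oh_{\CC_K^{\flat}}/\aa^{>c})$, which is zero in $\oh_{\CC_K^{\flat}}/\aa^{>c}$ precisely when $p^s/(p-1) \geq c$... but we want strict, so when $p^s/(p-1) > c$, i.e. $p^s > c(p-1)$, the image $f((q-1)^{p^s}M)$ vanishes modulo $\aa^{>c}$. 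Wait — I should be careful: $f(m)$ lands in the image of $\aa^{\geq p^s/(p-1)}$ in $\oh_{\CC_K^{\flat}}/\aa^{>c}$, and this image is $0$ iff $\aa^{\geq p^s/(p-1)} \subseteq \aa^{>c}$ iff $p^s/(p-1) > c$. Good, that matches the hypothesis.

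Next I would assemble the argument: given $g \in G_s$, its image in $\Gamma$ lies in $\Gamma_s$, so by the proposition preceding this one, $(g-1)M \subseteq (q-1)^{p^s}M$ (for the $\Gamma$-action, which is how $g$ acts on $M$). Now compute, for $x \in M$: $g(f)(x) = g\bigl(f(g^{-1}x)\bigr)$; writing $g^{-1}x = x + ((g^{-1}-1)x)$ with $(g^{-1}-1)x \in (q-1)^{p^s}M$, $\AA$-linearity of $f$ gives $f(g^{-1}x) = f(x) + f((g^{-1}-1)x)$, and the second term vanishes mod $\aa^{>c}$ by the valuation computation above. Hence $g(f)(x) = g(f(x))$, which is exactly $G_s$-formality. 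Applying this with $c = b = i/(p-1)$ gives the "in particular" clause, since $p^s > b(p-1) = i$.

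The main obstacle I anticipate is bookkeeping around the transition from $G_s$ to $\Gamma_s$ and making sure the semilinear action of $g$ on the coefficient ring $\oh_{\CC_K^{\flat}}/\aa^{>c}$ does not interfere: I need that $g$ preserves $\aa^{>c}$ (true, as noted earlier in the text, these ideals are $G_K$-stable) so that $g$ acts on the quotient, and that $g$ acting on $f((g^{-1}-1)x) = 0$ still gives $0$ — trivial. A slightly subtler point is that $(g^{-1}-1)x$ should be computed using the $\Gamma$-action on $M$ and its image under $f$ using $\AA$-linearity of $f$; here one must confirm that "$f$ is $\AA$-linear" in Definition~\ref{def:Jc} really means $\AA$-linear (it does, as $f \in \mathrm{Hom}_{\AA,\varphi}$), so that $f$ kills $(q-1)^{p^s}M$ up to the stated valuation. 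None of these steps involves a genuine difficulty; the content is entirely the valuation estimate $v^\flat(q-1) = 1/(p-1)$ combined with the already-established inclusion $(g-1)M \subseteq (q-1)^{p^s}M$.
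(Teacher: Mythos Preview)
Your proposal is correct and follows essentially the same approach as the paper: both arguments use the previously established inclusion $(g-1)M \subseteq (q-1)^{p^s}M$ for $g \in \Gamma_s$ (with the observation that $G_s$ acts on $M$ through $\Gamma_s$), combine it with the $\AA$--linearity of $f$ and the valuation computation $v^{\flat}(\varepsilon^{1/p}-1)=1/(p-1)$ to conclude that $f((g^{-1}-1)x)$ vanishes in $\oh_{\CC_K^{\flat}}/\aa^{>c}$ whenever $p^s/(p-1)>c$. The paper's proof is simply a more compressed version of your argument.
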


\begin{proof}
Let $f \in J_c$ be arbitrary. By Proposition~\ref{prop:CrystallineConditionAnalogue}, for every $x \in M$ and $g \in G_s$, we have $g(x)-x=(q-1)^{p^{s-\delta_p}}y$ for some $y$. Applying $f$, we obtain $f(g(x))-f(x)=(\varepsilon^{1/p}-1)^{p^{s-\delta_p}}f(y)$. Since $v^{\flat}(\varepsilon^{1/p}-1)=1/(p-1),$ we infer that $(\varepsilon^{1/p}-1)^{p^{s-\delta_p}}f(y)=0$ in $\oh_{\CC_K^{\flat}}/\aa^{>c})$ when $p^{s-\delta_p}/(p-1)>c$. That is, under the assumption $p^{s-\delta_p}>c(p-1)$, every $f \in J_c$ is $G_s$-invariant, as desired.
\end{proof}

We further need a version of Definition~\ref{def:Jc} with restricted coefficients.

\begin{deff}
Fix an integer $s \geq 0$ and a real number $c$ satisfying $p^s>c>0$. For an algebraic extension $E/K_{s},$ define
$$J_c^{(s)}(E)=\mathrm{Hom}_{\varphi, \AA}(M, (\varphi_k^{s})^*\oh_E/\aa_E^{>c/p^s}).$$
Here $\varphi_k^s$ denotes the $s$--th power of the Frobenius of $k$. Since $1>c/p^s$, $p=0$ in $\oh_E/\aa_E^{>c/p^s}$ and it is therefore naturally a $k$-algebra, so the indicated pullback makes sense. We further view $(\varphi_k^{s}))^*\oh_E/\aa_E^{>c/p^s}$ as an $\AA/p$--module via $q-1\mapsto \zeta_{p^{s+1}}-1$. When the extension $E/K_{s}$ is Galois, we endow $J_c^{(s),E}$ with the action of $G_{s}$, given by $$g(f)(x)=g(f(g^{-1}x)),\;\; g \in G_{s},\; f \in J_c^{(s),E}, \; x \in M.$$
\end{deff}

\begin{rem}\label{rem:TwistedAction}
When $E=\overline{K}$, there is a $G_{s}$--equivariant isomorphism $\oh_{\CC_K^{\flat}}/\aa^{>c} \simeq (\varphi_k^{s})^*\oh_{\overline{K}}/\aa_{\overline{K}}^{>c/p^s}$; consequently, there is an induced isomorphism $J_c \simeq J_c^{(s)}(\overline{K})$ of  $G_{s}$--modules. Similarly, when $F/E/K_{s}$ is a tower of algebraic extensions, the map $\oh_{E}/\aa_E^{>c/p^s} \rightarrow \oh_{F}/\aa_F^{>c/p^s}$ is injective (note that $\aa_F^{>c/p^s}\cap \oh_E=\aa_E^{>c/p^s}$) and it is $G_{s}$--equivariant when both $F$ and $E$ are Galois over $K_{s}$. Thus, we obtain an injection $J_c^{(s)}(E) \rightarrow J_c^{(s)}(F)$, which is  $G_{s}$--equivariant in the Galois case. 

Fixing $E$ and $s$, for $0<d \leq c < p^s$ we have the evident map $J_c^{(s)}(E)\rightarrow J_d^{(s)}(E)$ induced by the quotient map $\oh_{\overline{K}}/\aa_{\overline{K}}^{>c/p^s}\rightarrow \oh_{\overline{K}}/\aa_{\overline{K}}^{>d/p^s}$. Denote this map by $\rho_{c, d}^{(s)}(E)$, and its image by $I_{c, d}^{(s)}(E)$.
\end{rem}

Finally, we introduce a variant where we lift the coefficients to $\oh_{E}$ from $\oh_{E}/\aa_{E}^{>c/p^s}$. For that purpose, we fix some further notation first.  

\begin{nott}\label{not:FandVLifts}
For an integer $s$ with $p^s>i$, let $F_{(s)}, V_{(s)}$ be the images of the matrices $F$ and $V$, resp., under the map $k[[q-1]]\rightarrow (\varphi_k^s)^*\oh_{K_{s}}/p$. We then identify $(\varphi_k^s)^*\oh_{K_{s}}/p$ with $\oh_{K_{s}}/p$ and consider some lifts $\widetilde{F}_{(s)}, \widetilde{V}'_{(s)}$ of $F_{(s)}$ and $V_{(s)},$ resp., to $\oh_{K_{s}}$. Then we have 
$$\widetilde{F}_{(s)}\widetilde{V}'_{(s)}\equiv (\zeta_{p^{s+1}}-1)^{(p-1)i}\mathrm{Id} \pmod{p}.$$
It follows that $\widetilde{F}_{(s)}\widetilde{V}'_{(s)}=(\zeta_{p^{s+1}}-1)^{(p-1)i}(\mathrm{Id}+C)$ for a matrix $C$ with entries in $\aa_{K_{s}}^{>0}$ (here we use that $i/p^s <1=v_{K}(p)$). The matrix $\mathrm{Id}+C$ has an inverse of the form $\mathrm{Id}+D$ where $D$ again has entries in $\aa_{K_{s}}^{>0}$ (it is given by $\sum_{n=1}^{\infty}(-C)^n$). Set $\widetilde{V}_{(s)}=\widetilde{V}'_{(s)}(\mathrm{Id}+D)$. The resulting matrices then satisfy the identity (of matrices over $K_{s}$)
$$\widetilde{F}_{(s)}\widetilde{V}_{(s)}=(\zeta_{p^{s+1}}-1)^{(p-1)i}\mathrm{Id}.$$ 
\end{nott} 

\begin{deff}
Given an integer $s$ with $p^s>i$ and an algebraic extension $E/K_{s},$ the set $\widetilde{J}^{(s)}(E)$ is defined as
$$\widetilde{J}^{(s)}(E)=\{\underline{\widetilde{x}}=(\widetilde{x}_1, \widetilde{x}_2, \dots, \widetilde{x}_d)\in \oh_E^{\oplus d}\;|\; \underline{\widetilde{x}^p}=\underline{\widetilde{x}}\widetilde{F}_{(s)}\},$$ 
where $\underline{\widetilde{x}^p}$ denotes the vector $(\widetilde{x}_1^p, \widetilde{x}_2^p, \dots, \widetilde{x}_d^p)$.
When $E/K_{s},$ is Galois, we endow $\widetilde{J}^{(s)}(E)$ with action on entries of $\underline{\widetilde{x}}$.
\end{deff}

Given $c$ with $0<c<p^s$, there is an evident map $\widetilde{\rho}^{(s)}_c(E):\widetilde{J}^{(s)}(E) \rightarrow J_c^{(s)}(E)$. When $E=\overline{K}$, this map is $G_{s}$--equivariant as long as $J_c$ is $G_{s}$--formal. By Proposition~\ref{prop:GsFormal}, this is indeed the case as long as $p^{s-\delta_p}/(p-1)>c$. In particular, when $p$ is odd, this is true for $c:=b=i/(p-1)$ under our assumption $p^s>i$, and when $p=2$, we need the stronger assumption $2^s>2i$. Note that both these conditions are satisfied under the slightly stronger condition $(p-1)p^{s-1}>i$, or equivalently, $p^s>a$, which we assume from now on.

We need the following enhancement of Proposition~\ref{prop:Approximation1}, analogous to \cite[Lemma~4.1.4]{CarusoLiu}.

\begin{prop}\label{prop:Approximation2}
Assume $p^s>a,$ that is, $(p-1)p^{s-1}>i$. Then for every algebraic extension $E/K_{s},$ the map $\widetilde{\rho}^{(s)}_b(E):\widetilde{J}^{(s)}(E) \rightarrow J_b^{(s)}(E)$ is injective and its image is $I^{(s)}_{a, b}(E)$.
\end{prop}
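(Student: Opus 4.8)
The plan is to repeat, mutatis mutandis, the argument proving Proposition~\ref{prop:Approximation1}, transported from $\oh_{\CC_K^\flat}$ with its (tilted) Frobenius to $\oh_E$ with its honest $p$-power map; the role of $\varepsilon^{1/p}-1$, whose tilted valuation is $1/(p-1)$, is now taken by $\zeta_{p^{s+1}}-1$, a uniformizer of $K_{p^{s+1}}$ with $v_K(\zeta_{p^{s+1}}-1)=1/(p^s(p-1))$, which is precisely what rescales all precisions by $p^{-s}$. The inclusion $\mathrm{Im}\,\widetilde\rho^{(s)}_b(E)\subseteq I^{(s)}_{a,b}(E)$ is immediate: for $\underline{\widetilde x}\in\widetilde J^{(s)}(E)$, reducing the relation $\underline{\widetilde{x}^p}=\underline{\widetilde x}\widetilde F_{(s)}$ modulo $\aa_E^{>a/p^s}$ (which kills $p$, as $a/p^s<1$) exhibits $\underline{\widetilde x}\bmod\aa_E^{>a/p^s}$ as an element $\widetilde\rho^{(s)}_a(E)(\underline{\widetilde x})$ of $J^{(s)}_a(E)$, and $\widetilde\rho^{(s)}_b(E)=\rho^{(s)}_{a,b}(E)\circ\widetilde\rho^{(s)}_a(E)$. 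So it remains to prove that $\widetilde\rho^{(s)}_b(E)$ is injective and that $\rho^{(s)}_{a,b}(E)(f)\in\mathrm{Im}\,\widetilde\rho^{(s)}_b(E)$ for every $f\in J^{(s)}_a(E)$; and since $\widetilde J^{(s)}(-)$, $J^{(s)}_c(-)$, $I^{(s)}_{a,b}(-)$ and all maps among them commute with the filtered colimit over finite subextensions $E'\subseteq E$ (using $\aa_E^{>c/p^s}\cap\oh_{E'}=\aa_{E'}^{>c/p^s}$ and finite presentation of $M$ over $k[[q-1]]$), it suffices to treat the case $E/K_{p^{s+1}}$ finite, where $\oh_E$ is a complete DVR.

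So assume $E$ finite, fix $f\in J^{(s)}_a(E)$, choose a lift $\underline{\widetilde x}_0\in\oh_E^{\oplus d}$ of $f(\underline e)$, and set $\widetilde Q=\underline{\widetilde{x}_0^{\,p}}-\underline{\widetilde x}_0\widetilde F_{(s)}$, whose entries lie in $\aa_E^{\geq c'}$ for some $c'>a/p^s$ (by $\varphi$-compatibility of $f$ and discreteness of $v_K$). Seeking a correction $\underline{\widetilde y}$ of positive valuation with $\underline{\widetilde x}_0+\underline{\widetilde y}\in\widetilde J^{(s)}(E)$, I would expand the $p$-th powers coordinatewise — each cross term carrying a factor $\tfrac1p\binom{p}{k}\in\ZZ_p$, hence an extra $p$ — impose the defining relation, and multiply on the right by $\widetilde V_{(s)}$; by the identity $\widetilde F_{(s)}\widetilde V_{(s)}=(\zeta_{p^{s+1}}-1)^{(p-1)i}\mathrm{Id}$ of Notation~\ref{not:FandVLifts}, whose scalar has $v_K=i/p^s$, this becomes a fixed-point equation
\[
\underline{\widetilde y}=(\zeta_{p^{s+1}}-1)^{-(p-1)i}\bigl(\widetilde Q+\underline{\widetilde{y}^{\,p}}+p\cdot R(\underline{\widetilde y})\bigr)\widetilde V_{(s)}=:\widetilde C(\underline{\widetilde y}),
\]
with $R$ the (integral) collection of binomial cross terms. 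One then checks that for $t:=c'-i/p^s$, which satisfies $t>b/p^s$ since $c'>a/p^s$ and $a-i=b$, the map $\widetilde C$ sends $(\aa_E^{\geq t})^{\oplus d}$ into itself and is a contraction there: the summands $\widetilde Q$, $\underline{\widetilde{y}^{\,p}}$, $p\cdot R(\underline{\widetilde y})$ have $v_K\geq c'$, $\geq pt$, $\geq 1+t$ respectively, each exceeding $t+i/p^s$ by the choice of $t$ and by $p^s>i$, and the analogous estimate for differences gives contraction constant $\min\{(p-1)t,1\}-i/p^s>0$. Banach's fixed point theorem on the complete space $(\aa_E^{\geq t})^{\oplus d}$ yields a unique $\underline{\widetilde y}$, hence a unique $\underline{\widetilde x}\in\widetilde J^{(s)}(E)$ with $\underline{\widetilde x}\equiv f(\underline e)\pmod{\aa_E^{>b/p^s}}$; existence gives $\rho^{(s)}_{a,b}(E)(f)\in\mathrm{Im}\,\widetilde\rho^{(s)}_b(E)$, and uniqueness (applied to two preimages of the same element, which automatically agree modulo $\aa_E^{>b/p^s}$ and both solve $\widetilde C$'s equation) gives injectivity of $\widetilde\rho^{(s)}_b(E)$.

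Compared with Proposition~\ref{prop:Approximation1} there are two new points, and I expect the bookkeeping around them to be the only real work. First, the $p$-power map produces the binomial remainder $p\cdot R(\underline{\widetilde y})$ — absent in the perfectoid setting, where Frobenius is literally the coordinatewise $p$-power in characteristic $p$ — which is harmless precisely because it is $p$-divisible and $v_K(p)=1>a/p^s$; this is the place where the standing hypothesis $p^s>a$ (rather than merely $p^s>i$) genuinely enters, beyond being needed for $J^{(s)}_a(E)$ to be defined at all. Second, $\oh_E$ fails to be complete for infinite $E$, which forces the reduction to finite subextensions; alternatively, one may solve the fixed-point equation over the complete ring $\oh_{\CC_K}$ and then observe that its solution, being uniquely characterized and invariant under $\gal(\overline K/E)$ — all of $\underline{\widetilde x}_0$, $\widetilde F_{(s)}$, $\widetilde V_{(s)}$ being defined over $\oh_E$ — already lies in $\oh_E^{\oplus d}$.
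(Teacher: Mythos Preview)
Your proof is correct and follows essentially the same contraction--mapping approach as the paper: set up the fixed--point equation $(\underline{x}+\underline{y})^p=(\underline{x}+\underline{y})\widetilde{F}_{(s)}$, multiply by $\widetilde{V}_{(s)}$, and verify self--map and contraction estimates on $(\aa_E^{\geq t})^{\oplus d}$ for some $t>b/p^s$. Your explicit reduction to finite subextensions (or, alternatively, Galois descent from $\oh_{\CC_K}$) to secure completeness for Banach's theorem is a point the paper leaves implicit, and your separation of the binomial cross terms $p\cdot R(\underline{\widetilde y})$ is merely a cosmetic reorganization of the paper's $\alpha,\beta,\gamma,\delta,\epsilon$ decomposition.
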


\begin{proof} We proceed as in the proof of Proposition~\ref{prop:Approximation1}. Let us identify all the rings of the form $(\varphi_k^s)^*(\oh_E/\aa^{>t})$ appearing in the proof with $\oh_E/\aa^{>t}$. Fix $f \in J_b^{(s)}(E)$ and let $\underline{f}(\underline{e})\in (\oh_{E}/\aa^{>a/p^s})^{\oplus d}$ be the vector of images of the fixed basis $\underline{e}$ from Notation~\ref{not:FandV}. Choosing any lift $\underline{x}$ of $\underline{f}(\underline{e})$ to $\oh_{E}^{\oplus d}$, the aim is to show that there is a unique $\underline{y}\in (\aa^{>b/p^s})^{\oplus d}$ such that $\underline{x}+\underline{y} \in \widetilde{J}^{(s)}(E)$. 

The required equation then takes the form
$$(\underline{x}+\underline{y})^p=(\underline{x}+\underline{y})\widetilde{F}_{(s)}$$
which, after applying $\widetilde{V}_{(s)}$ on the right and simplifying, becomes
$$\underline{y}=(\zeta_{p^{s+1}}-1)^{-i(p-1)}(\underline{x}+\underline{y})^p\widetilde{V}_{(s)}-\underline{x}$$
(recall that we use the notation for $p$--th power of vectors component--by--component). Just like in the proof of Proposition~\ref{prop:Approximation1}, the goal is to show that the map $C$ given by
$$C(\underline{y})=(\zeta_{p^{s+1}}-1)^{-i(p-1)}(\underline{x}+\underline{y})^p\widetilde{V}_{(s)}-\underline{x}$$
takes $(\aa_E^{\geq c})^{\oplus d}$ to $(\aa_E^{\geq c})^{\oplus d}$ for $c$ with $c>b$ (and arbitrarily close to $b$), and that $C$ is a contraction on this space. 
To choose such $c$, note that we have $\underline{x}^p \equiv \underline{x}\widetilde{F}_{(s)}, \pmod{\aa_E^{>a/p^s}},$ and after applying $\widetilde{V}_{(s)}$ on the right, it follows that   $(\zeta_{p^{s+1}}-1)^{-i(p-1)}\underline{x}^p \widetilde{V}_{(s)} \equiv \underline{x} \pmod{\aa_E^{>t}}$ for $t=a/p^s-i/p^s=b/p^s.$ Then we may choose $c$ so that this congruence still holds modulo $\aa_E^{\geq c/p^s}$.

The fact that $C$ takes $(\aa_E^{\geq {c/p^s}})^{\oplus d}$ to $(\aa_E^{\geq {c/p^s}})^{\oplus d}$ is then easily seen as follows. Write
$$C(\underline{y})=\underbrace{(\zeta_{p^{s+1}}-1)^{-i(p-1)}\underline{x}^p\widetilde{V}_{(s)}-\underline{x}}_\alpha+\underbrace{(\zeta_{p^{s+1}}-1)^{-i(p-1)}p\underline{y}R\widetilde{V}_{(s)}}_\beta+\underbrace{(\zeta_{p^{s+1}}-1)^{-i(p-1)}\underline{y}^p\widetilde{V}_{(s)}}_\gamma,$$
where $p\underline{y}R$ consists of the mixed terms from the binomial expansion of $(\underline{x}+\underline{y})^p$. Then $\alpha \in (\aa_E^{\geq c/p^s})^{\oplus d}$ by the choice of $c$, and we further have $\beta \in (\aa_E^{\geq t})^{\oplus d}$ for $t\geq 1+c/p^s-i/p^s$ and $\gamma \in (\aa_E^{\geq u})^{\oplus d}$ for $u \geq cp/p^s-i/p^s,$ both of which are bigger than $c/p^s.$

To show that $C$ is a contraction on $\aa_E^{\geq c/p^s}$, let $h_0=\min\{1, (p-1)c/p^s\}$ and $h=h_0-i/p^s$ (then $h>0$). For $\underline{y}_1, \underline{y}_2 \in \aa_E^{\geq c/p^s}$ with $v_K(\underline{y}_1-\underline{y}_2)=t,$ we have
\begin{align*}
C(\underline{y}_1)-C(\underline{y}_2)&=(\zeta_{p^{s+1}}-1)^{-i(p-1)}((\underline{x}+\underline{y}_1)^p-(\underline{x}+\underline{y}_2)^p)\widetilde{V}_{(s)}\\
&=(\zeta_{p^{s+1}}-1)^{-i(p-1)}(\underbrace{p(\underline{y}_1-\underline{y}_2)S}_\delta+\underbrace{(\underline{y}_1^p-\underline{y}_2^p)}_{\epsilon})\widetilde{V}_{(s)}
\end{align*} 
The term $\delta$ consists of all the mixed terms in binomial expansions of $(\underline{x}+\underline{y}_1)^p$ and $(\underline{x}+\underline{y}_2)^p$ (and it is easy to see that it has the indicated form, with $S$ an integral matrix). The valuation of $\delta$ is therefore at least $1+t$, hence $1+t-i/p^s\geq t+h$ after multiplying by $(\zeta_{p^{s+1}}-1)^{-i(p-1)}$. Regarding $\epsilon$, we have
$$\underline{y}_1^p-\underline{y}_2^p=(\underline{y}_1-\underline{y}_2)(\underline{y}_1^{p-1}+\underline{y}_1^{p-2}\underline{y}_2+ \dots + \underline{y}_1\underline{y}_2^{p-2}+\underline{y}_2^{p-1})\,,$$ and therefore $\epsilon$ is of valuation at least $t+(p-1)c/p^s$. After accounting for $(\zeta_{p^{s+1}}-1)^{-i(p-1)}$, this becomes $t+(p-1)c/p^s-i/p^s \geq t+h$. Thus, the valuation of $C(\underline{y}_1)-C(\underline{y}_2)$ is at least $t+h$, showing that $C$ is a contraction on $(\aa_E^{\geq c/p^s})^{\oplus d}$ and thus, finishing the proof.
\end{proof}

Let us denote $L_{s}=LK_{s}=L[\zeta_{p^{s+1}}]$. Consider $s$ with $p^s>a,$ and an algebraic extension $E/K_{{s}}$. The four canonical maps between $J_{a}^{(s)}(E), J_{b}^{(s)}(E), J_a$ and $J_b$ induce the inclusion
$$\iota_{E, s}: I_{a, b}^{(s)}(E)=\rho_{a, b}^{(s)}(E)(J_a^{(s)}(E)) \hookrightarrow \rho_{a, b}(J_a)=I_{a, b}.$$

The next proposition regarding $\iota_{E, s}$ is a basis for establishing validity of Fontaine's property $(P_m)$ in our context; it is a direct analogue of \cite[Theorem~5.13]{Ram1} and \cite[Theorem~4.1.1]{CarusoLiu}.

\begin{prop}\label{prop:FixedPoints}
The map $\iota_{E, s}$ is an isomorphism if and only if $L_{s}\subseteq E$.
\end{prop}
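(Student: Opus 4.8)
The plan is to push everything through the ``lifted'' modules $\widetilde{J}^{(s)}(-)$ of Proposition~\ref{prop:Approximation2} and thereby reduce the assertion to a transparent Galois-descent statement about $T$.

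\textbf{Step 1: identifying $\iota_{E,s}$.} Under the running hypothesis $p^s>a$ we in particular have $p^s>i$, so by Proposition~\ref{prop:GsFormal} the actions on $J_a$ and $J_b$ are $G_{s+1}$-formal; consequently the isomorphisms $J_c\simeq J_c^{(s)}(\overline{K})$ of Remark~\ref{rem:TwistedAction} and the map $\widetilde{\rho}^{(s)}_b(\overline{K})$ are $G_{s+1}$-equivariant. Applying Proposition~\ref{prop:Approximation2} to $E$ and to $\overline{K}$, we get a bijection $\widetilde{J}^{(s)}(E)\simeq I_{a,b}^{(s)}(E)$ and a $G_{s+1}$-equivariant bijection $\widetilde{J}^{(s)}(\overline{K})\simeq I_{a,b}^{(s)}(\overline{K})$, the latter being identified with $I_{a,b}$ via Remark~\ref{rem:TwistedAction}. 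Since $\widetilde{F}_{(s)}$ has entries in $\oh_{K_{p^{s+1}}}\subseteq\oh_E$, one has $\widetilde{J}^{(s)}(E)=\widetilde{J}^{(s)}(\overline{K})\cap\oh_E^{\oplus d}$, and a straightforward diagram chase through the four canonical maps defining $\iota_{E,s}$ shows that, under the identifications above, $\iota_{E,s}$ becomes exactly this inclusion of subsets. Finally, by Proposition~\ref{prop:Approximation1} the map $\rho_b\colon T\xrightarrow{\ \sim\ }I_{a,b}$ is $G_K$-equivariant. Composing, we obtain a $G_{s+1}$-equivariant bijection $\widetilde{J}^{(s)}(\overline{K})\simeq T$ in which $G_{s+1}$ acts on the left by its natural coordinate-wise action on $\oh_{\overline{K}}^{\oplus d}$.

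\textbf{Step 2: conclusion.} As $\iota_{E,s}$ is injective, it is an isomorphism iff $\widetilde{J}^{(s)}(E)=\widetilde{J}^{(s)}(\overline{K})$, i.e.\ iff every solution vector $\underline{\widetilde{x}}\in\widetilde{J}^{(s)}(\overline{K})$ already lies in $\oh_E^{\oplus d}$, i.e.\ iff every $g\in G_E=\gal(\overline{K}/E)$ fixes $\widetilde{J}^{(s)}(\overline{K})$ pointwise (using $\overline{K}^{G_E}=E$). Since $K_{p^{s+1}}\subseteq E$ we have $G_E\subseteq G_{s+1}$, so via the $G_{s+1}$-equivariant bijection $\widetilde{J}^{(s)}(\overline{K})\simeq T$ of Step~1 this says precisely that every $g\in G_E$ acts trivially on $T$. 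Now $T=T(M)^{\vee}$ has the same Galois kernel as $T(M)$, so $G_L=\ker\bigl(G_K\to\mathrm{Aut}(T)\bigr)$ and $G_{L_{s+1}}=G_L\cap G_{s+1}$; hence the condition is exactly $G_E\subseteq G_{L_{s+1}}$, that is, $L_{s+1}\subseteq E$. This proves the proposition.

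The real content lies in Propositions~\ref{prop:Approximation1} and~\ref{prop:Approximation2} and in the formality input of Proposition~\ref{prop:GsFormal}, which are already available; the only point requiring care here is the bookkeeping of the $G_{s+1}$-action along the chain of identifications. In particular, the Frobenius twist $(\varphi_k^s)^*$ in the definitions of $J_c^{(s)}$ and $\widetilde{J}^{(s)}$ alters only the $k[[q-1]]$-module structure on the coefficient rings and leaves the coordinate-wise Galois action untouched, and the hypothesis $p^s>i$ (forced by $p^s>a$) is exactly what makes $\widetilde{\rho}^{(s)}_b(\overline{K})$, hence the identification $\widetilde{J}^{(s)}(\overline{K})\simeq T$, equivariant for $G_{s+1}$.
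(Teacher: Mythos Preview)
Your proof is correct and follows essentially the same route as the paper's: both identify $\iota_{E,s}$, via Propositions~\ref{prop:Approximation1} and~\ref{prop:Approximation2} and Remark~\ref{rem:TwistedAction}, with the inclusion $\widetilde{J}^{(s)}(E)\hookrightarrow\widetilde{J}^{(s)}(\overline{K})\simeq T$, observe that $\widetilde{J}^{(s)}(E)=\widetilde{J}^{(s)}(\overline{K})^{G_E}$, and conclude by Galois descent. Your write-up is somewhat more explicit about the bookkeeping (equivariance via Proposition~\ref{prop:GsFormal}, the identity $G_{L_{s+1}}=G_L\cap G_{s+1}$), but the argument is the same.
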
  

\begin{proof}
We have a series of $G_{s}$--equivariant bijections
$$\widetilde{J}^{(s)}(\overline{K}) \simeq I_{a, b}^{(s)}(\overline{K}) \simeq I_{a, b} \simeq T,$$
where the indicated isomorphisms use Proposition~\ref{prop:Approximation2}, Remark~\ref{rem:TwistedAction}, and Proposition~\ref{prop:Approximation1}, respectively.
Similarly, by Proposition~\ref{prop:Approximation2} we have a $G_{s}$--equivariant isomorphism $\widetilde{J}^{(s)}(E) \simeq I_{a, b}^{(s)}(E)$, and we clearly have $\widetilde{J}^{(s)}(E)=\widetilde{J}^{(s)}(\overline{K})^{G_E}$. Thus, the map $\iota_{E, s}$ may be replaced by the inclusion $T^{G_E}\subseteq T$, for which the statement of the proposition is obviously valid.
\end{proof}

\begin{prop}\label{prop:ProofOfPm}
Let $s$ be an integer such that $p^s>a$, and let $m=a/p^s$. Then Fontaine's property $(P_m^{L_{s}/K_{s}})$ holds.
\end{prop}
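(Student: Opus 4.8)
The plan is to unwind the definition of Fontaine's property $(P_m^{L_{s+1}/K_{p^{s+1}}})$ using Proposition~\ref{prop:FixedPoints}. So fix an algebraic extension $E/K_{p^{s+1}}$ together with an $\oh_{K_{p^{s+1}}}$-algebra map $\theta: \oh_{L_{s+1}}\rightarrow \oh_E/\aa_E^{>m}$; the goal is to produce a $K_{p^{s+1}}$-algebra embedding $L_{s+1}\hookrightarrow E$. By Proposition~\ref{prop:FixedPoints}, it is enough to show $\iota_{E,s}$ is an isomorphism, equivalently (by injectivity, which always holds) surjective. So the task reduces to: given the map $\theta$, construct from it enough structure to hit every element of $I_{a,b}$ via $I_{a,b}^{(s)}(E)$.

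The key step is to use $\theta$ to transport elements of $\widetilde{J}^{(s)}(L_{s+1})$ (equivalently, after Proposition~\ref{prop:Approximation2} and Proposition~\ref{prop:FixedPoints} applied with $E=L_{s+1}$, all of $T$) into $J_a^{(s)}(E)$. Concretely, since $m=a/p^s$, reduction modulo $\aa_E^{>m}$ of the entries of a tuple $\underline{\widetilde x}\in\widetilde J^{(s)}(L_{s+1})\subseteq\oh_{L_{s+1}}^{\oplus d}$, precomposed with $\theta$, yields a tuple in $(\oh_E/\aa_E^{>a/p^s})^{\oplus d}$ satisfying the defining congruence $\underline x^p\equiv \underline x\,\widetilde F_{(s)}$, i.e. an element of $J_a^{(s)}(E)$. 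Here one must check that $\theta$ being an $\oh_{K_{p^{s+1}}}$-algebra map ensures the Frobenius/$\widetilde F_{(s)}$-relation is preserved after reduction — this is where the choice $m=a/p^s$ is exactly right, since one needs to work modulo $\aa^{>a/p^s}$ to even state the equation defining $J^{(s)}_a$, and $\theta$ carries the mod-$p$-coefficient matrix $\widetilde F_{(s)}$ (defined over $\oh_{K_{p^{s+1}}}$) correctly. Composing with $\rho^{(s)}_{a,b}(E)$ and then the canonical map to $I_{a,b}$, one sees that the image of $\iota_{E,s}$ contains the image of the full module $\widetilde J^{(s)}(L_{s+1})\simeq T\simeq I_{a,b}$; hence $\iota_{E,s}$ is surjective, so an isomorphism, so by Proposition~\ref{prop:FixedPoints} we get $L_{s+1}\subseteq E$, which (choosing an embedding) gives the desired $K_{p^{s+1}}$-algebra map $L_{s+1}\hookrightarrow E$.

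The main obstacle I expect is bookkeeping the compatibility of the various reduction and pullback maps — in particular matching the Frobenius-twisted coefficient ring $(\varphi_k^s)^*\oh_E/\aa_E^{>a/p^s}$ with the untwisted $\oh_E/\aa_E^{>a/p^s}$ (as was done silently in the proof of Proposition~\ref{prop:Approximation2}), and verifying that the map built from $\theta$ genuinely lands in $J_a^{(s)}(E)$ rather than just approximately satisfying the relation. One should also be slightly careful that $p^s>a$ guarantees $a/p^s<1$, so that $p=0$ in $\oh_E/\aa_E^{>a/p^s}$ and all the $k$-algebra structures and the identification of matrices over $k[[q-1]]$ with matrices over $\oh_{K_{p^{s+1}}}/p$ are legitimate. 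Once these identifications are in place, the argument is essentially formal: $(P_m)$ for $L_{s+1}/K_{p^{s+1}}$ is just a restatement, via Propositions~\ref{prop:Approximation2} and~\ref{prop:FixedPoints}, of the tautology that $T^{G_E}=T$ as soon as $E$ contains enough of the splitting field — and the hypothesis $m=a/p^s$ is precisely what makes an $\oh_{K_{p^{s+1}}}$-algebra map $\oh_{L_{s+1}}\to\oh_E/\aa_E^{>m}$ equivalent to such containment.
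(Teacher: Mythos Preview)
Your overall strategy matches the paper's: use the given map $\theta$ to transport $\widetilde{J}^{(s)}(L_{s+1})$ (which has full size $|T|$ by Propositions~\ref{prop:Approximation2} and~\ref{prop:FixedPoints}) into $I_{a,b}^{(s)}(E)$, and then conclude that $\iota_{E,s}$ is bijective by cardinality. However, there is a genuine gap. You assert that ``the image of $\iota_{E,s}$ contains the image of the full module $\widetilde J^{(s)}(L_{s+1})\simeq T\simeq I_{a,b}$'' and infer surjectivity. For this to work, you need the composite
\[
\widetilde J^{(s)}(L_{s+1}) \longrightarrow I_{a,b}^{(s)}(E) \stackrel{\iota_{E,s}}{\longrightarrow} I_{a,b}
\]
to be itself surjective, equivalently (by counting) the first arrow to be \emph{injective}. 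But this composite is \emph{not} the canonical bijection $\widetilde J^{(s)}(L_{s+1})\simeq I_{a,b}$: it uses $\theta$ followed by a chosen embedding $E\hookrightarrow\overline K$, not the fixed embedding $L_{s+1}\hookrightarrow\overline K$. So you cannot simply read off surjectivity from $\widetilde J^{(s)}(L_{s+1})\simeq I_{a,b}$; what you call ``bookkeeping'' is hiding the actual content.

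What is missing is precisely the substantive step of the paper's proof: one must show that $\theta$ preserves valuations well enough to induce an \emph{injection} $\oh_{L_{s+1}}/\aa_{L_{s+1}}^{>b/p^s}\hookrightarrow \oh_E/\aa_E^{>b/p^s}$ (hence $I_{a,b}^{(s)}(L_{s+1})\hookrightarrow I_{a,b}^{(s)}(E)$). The paper does this by first invoking Proposition~\ref{prop:PmAndRamification} to replace $K_{p^{s+1}}$ by its maximal unramified extension $K_{s+1}^{\mathrm{un}}$ inside $L_{s+1}$, so that $L_{s+1}/K_{s+1}^{\mathrm{un}}$ is totally ramified; then an Eisenstein--polynomial computation shows that any lift $\widetilde t\in\oh_E$ of $\theta(\varpi)$ (for $\varpi$ a uniformizer of $L_{s+1}$) satisfies $v_K(\widetilde t)=v_K(\varpi)$, which gives both well-definedness and injectivity of the induced maps modulo $\aa^{>c/p^s}$ for $c\in\{a,b\}$. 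Without this argument your transport map could a~priori collapse distinct elements of $\widetilde J^{(s)}(L_{s+1})$, and the surjectivity of $\iota_{E,s}$ would remain unproven.
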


\begin{proof}
We follow the proof of \cite[Proposition~5.14]{Ram1}, ultimately based on the arguments of \cite{Hattori, CarusoLiu}. By Proposition~\ref{prop:PmAndRamification}, we may replace $K_{s}$ by the maximal unramified extension $K_{s}^{\mathrm{un}}$ inside $L_{s}$, and prove $(P_m^{L_{s}/K_{s}^{\mathrm{un}}})$ instead. 

Let $E/K_{s}^{\mathrm{un}}$ be an algebraic extension and let $f: \oh_{L_{s}}\rightarrow \oh_{E}/\aa_E^{>m}$ be an $\oh_{K_{s}^{\mathrm{un}}}$--algebra map. For $c \in \{a, b\},$ we consider the induced map
$$f_c: \oh_{L_{s}}/\aa_{L_{s}}^{>c/p^s}\rightarrow \oh_{E}/\aa_{E}^{>c/p^s}.$$  
First, we claim that this map is well--defined an injective.
To prove this, consider a uniformizer $\varpi \in L_{s}$. The extension $L_{s}/K_{s}^{\mathrm{un}}$ is totally ramified, so $\varpi$ satisfies an Eisenstein relation of the form (with $e=e(L_{s}/K_{s})$)
$$\varpi^e=c_1\varpi^{e-1}+c_2\varpi^{e-2}+\dots +c_{e-1}\varpi+c_e,$$ 
with $v_K(c_i)\geq 1/(p^s(p-1))$ for all $i$, and $v_K(c_e)=1/(p^s(p-1))$. Applying $f,$ the same relation applies to $f=f(\varpi) \in \oh_{E}/\aa_E^{>m}$. Choosing a lift $\widetilde{t}$ of $t$ to $\oh_E$, we then obtain the relation 
$$\widetilde{t}^e=c_1\widetilde{t}^{e-1}+c_2\widetilde{t}^{e-2}+\dots +c_{e-1}\widetilde{t}+c_e+r,$$ 
where $r \in \aa_E^{>m}$. Since $m\geq 1/(p^s(p-1)),$ the valuation of the left--hand side is that of $c_e$, and it follows that $v_K(\widetilde{t})=1/(ep^s(p-1))=v_K(\varpi).$ We may therefore conclude that
$$\forall N: \;\; \varpi^N \in \aa_{L_{s}}^{>c/p^s} \text{ if and only if } \frac{N}{ep^s(p-1)}>\frac{c}{p^s} \text{ if and only if } \widetilde{t}^N \in \aa_{E}^{>c/p^s}.$$
The `only if' part shows that $f_c$ is well--defined, and the `if' part shows that it is injective.

Applying $(\varphi_k^s)^*(-)$ to $f_a$ and $f_b$, one obtains a commutative square 

\begin{center}
\begin{tikzcd}[column sep = large]
(\varphi_k^s)^*\oh_{L_{s}}/\aa_{L_{s}}^{>a/p^s} \ar[r, hook, "(\varphi_k^s)^*(f_a)"] \ar[d] & (\varphi_k^s)^*\oh_{E}/\aa_{E}^{>a/p^s} \ar[d] \\
(\varphi_k^s)^*\oh_{L_{s}}/\aa_{L_{s}}^{>b/p^s} \ar[r, hook, "(\varphi_k^s)^*(f_b)"] & (\varphi_k^s)^*\oh_{E}/\aa_{E}^{>b/p^s}\;,
\end{tikzcd}
\end{center}
which in turn induces a commutative square
\begin{center}
\begin{tikzcd}[column sep = large]
J_a^{(s)}(L_{s}) \ar[r, hook] \ar[d, "\rho_{a,b}^{(s)}(L_{s})"] & J_a^{(s)}(E) \ar[d, "\rho_{a,b}^{(s)}(E)"] \\
J_b^{(s)}(L_{s}) \ar[r, hook] & J_b^{(s)}(E)\;.
\end{tikzcd}
\end{center}
Taking images of the vertical maps, we obtain an injection $I_{a, b}^{{(s)}}(L_{s}) \hookrightarrow I_{a, b}^{{(s)}}(E)$. By Proposition~\ref{prop:FixedPoints} applied to $L_{s}$, $I_{a, b}^{{(s)}}(L_{s}) \simeq T$, and therefore the injection $\iota_{E, s}$ from Proposition~\ref{prop:FixedPoints} has to be an isomorphism (it is an injection of finite sets where the source has size at least as much as the target). It follows by Proposition~\ref{prop:FixedPoints} again that $L_{s}\subseteq E$. This finishes the proof.
\end{proof}

Finally, we are ready to prove the ramification bound in full.

\begin{thm}\label{thm:conclusion}
For $$\alpha=\left(\left\lfloor \log_p\left(\frac{ip}{p-1}\right) \right\rfloor+1\right)\;\;\text{and}\;\;\beta=\mathrm{max}\left\{0, \frac{ip}{p^{\alpha}(p-1)}-\frac{1}{p-1}\right\},$$ 
one has $\mu_{L/K}\leq 1+\alpha+\beta.$
\end{thm}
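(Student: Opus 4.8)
The plan is to bound $\mu_{L/K}$ by passing to the cyclotomic extension, applying Lemma~\ref{lem:MuInTowers} to the tower $L_{s+1}\supseteq K_{p^{s+1}}\supseteq K$, inputting the explicit ramification of $K_{p^{s+1}}/K$, and finally optimizing over $s$. Since $L\subseteq L_{s+1}$ and the upper--numbering ramification subgroups pass to quotients, one has $\mu_{L/K}\le\mu_{L_{s+1}/K}$ for every $s$, so it suffices to bound $\mu_{L_{s+1}/K}$ for a well--chosen $s$. By Lemma~\ref{lem:MuInTowers},
$$\mu_{L_{s+1}/K}=\max\left\{\mu_{K_{p^{s+1}}/K},\ \phi_{K_{p^{s+1}}/K}\bigl(\mu_{L_{s+1}/K_{p^{s+1}}}\bigr)\right\}.$$

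The cyclotomic piece is classical. As $K$ is absolutely unramified, $K_{p^{s+1}}/K$ is totally ramified abelian of degree $p^s(p-1)$, with lower--numbering filtration the familiar one attached to the filtration of $\gal(K_{p^{s+1}}/K)\cong(\ZZ/p^{s+1}\ZZ)^\times$ by the subgroups $1+p^j\ZZ_p$. From this one reads off $\mu_{K_{p^{s+1}}/K}=s+1$, together with the piecewise--linear Herbrand function; in particular $\phi_{K_{p^{s+1}}/K}(p^s)=s+1$ and
$$\phi_{K_{p^{s+1}}/K}(t)=(s+1)+\frac{t-p^s}{p^s(p-1)}\qquad(t\ge p^s).$$
For the remaining piece: whenever $p^s>a$, Proposition~\ref{prop:ProofOfPm} gives $(\mathrm{P}^{L_{s+1}/K_{p^{s+1}}}_{a/p^s})$, hence by Proposition~\ref{prop:PmAndRamification},
$$\mu_{L_{s+1}/K_{p^{s+1}}}\le e_{L_{s+1}/K_{p^{s+1}}}\cdot\frac{a}{p^s},$$
so that $\mu_{L/K}\le\max\{s+1,\ \phi_{K_{p^{s+1}}/K}(e_{L_{s+1}/K_{p^{s+1}}}\cdot a/p^s)\}$.

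The step that needs care --- and the main obstacle --- is that $e_{L_{s+1}/K_{p^{s+1}}}$ is \emph{not} controlled by $p$ and $i$ alone (it reflects the size of $T$), so the last estimate is not yet of the required shape; one must check that this index is absorbed by the highly ramified cyclotomic base change. The mechanism, following \cite{CarusoLiu, Ram1}, is the transitivity identity $\phi_{K_{p^{s+1}}/K}=\phi_{L_{s+1}/K}\circ\psi_{L_{s+1}/K_{p^{s+1}}}$ together with the sharper, lower--numbering form of Fontaine's estimate, namely that the largest lower--numbering break of $L_{s+1}/K_{p^{s+1}}$ is $\le e_{L_{s+1}/K_{p^{s+1}}}\cdot a/p^s$: after feeding that break through $K_{p^{s+1}}/K$, whose Herbrand function has slope $1/(p^s(p-1))=1/e_{K_{p^{s+1}}/K}$ beyond $p^s$, the dependence on $e_{L_{s+1}/K_{p^{s+1}}}$ drops out and what remains is governed by $e_{K_{p^{s+1}}/K}\cdot(a/p^s)=(p-1)a=pi$. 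In other words one arrives at
$$\mu_{L/K}\le\max\left\{s+1,\ \phi_{K_{p^{s+1}}/K}(pi)\right\}.$$

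Finally, take $s=\alpha$, the least integer with $p^\alpha>a=ip/(p-1)$, so $\alpha=\lfloor\log_p(ip/(p-1))\rfloor+1$. If $pi\le p^\alpha$, then $\phi_{K_{p^{\alpha+1}}/K}(pi)\le\phi_{K_{p^{\alpha+1}}/K}(p^\alpha)=\alpha+1$ and $\beta=0$, giving $\mu_{L/K}\le\alpha+1=1+\alpha+\beta$. If $pi>p^\alpha$, then by the displayed formula for $\phi_{K_{p^{\alpha+1}}/K}$ past $p^\alpha$,
$$\phi_{K_{p^{\alpha+1}}/K}(pi)=(\alpha+1)+\frac{pi-p^\alpha}{p^\alpha(p-1)}=(\alpha+1)+\frac{ip}{p^\alpha(p-1)}-\frac1{p-1}=1+\alpha+\beta,$$
so again $\mu_{L/K}\le\max\{\alpha+1,\,1+\alpha+\beta\}=1+\alpha+\beta$. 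This is the asserted bound, and combined with the translation $G^{(u)}=G^{u-1}$ it yields Theorem~\ref{thm:MainThm}.
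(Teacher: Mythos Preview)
Your overall architecture matches the paper's: pass to $L_{s+1}$, apply Lemma~\ref{lem:MuInTowers} to the tower $L_{s+1}/K_{p^{s+1}}/K$, feed in the cyclotomic Herbrand function, and set $s=\alpha$. The gap is the middle step. You read Proposition~\ref{prop:PmAndRamification} as yielding $\mu_{L_{s+1}/K_{p^{s+1}}}\le e_{L_{s+1}/K_{p^{s+1}}}\cdot a/p^s$, correctly observe that $e_{L_{s+1}/K_{p^{s+1}}}$ is uncontrolled, and then attempt to absorb it via a lower--numbering detour. That recovery does not work as written: the asserted lower--break bound ``$\le e_{L_{s+1}/K_{p^{s+1}}}\cdot a/p^s$'' is not a form of Fontaine's estimate (the lower break is measured in $v_{L_{s+1}}$, so the conversion factor from the $v_K$--scaled $m=a/p^s$ is $e_{L_{s+1}/K}$, not $e_{L_{s+1}/K_{p^{s+1}}}$); and in any case, applying $\phi_{K_{p^{s+1}}/K}$ to a \emph{lower} break of $L_{s+1}/K_{p^{s+1}}$ has no direct meaning for $\mu_{L_{s+1}/K}$ --- it is the \emph{upper} break $\mu_{L_{s+1}/K_{p^{s+1}}}$ that one composes with $\phi_{K_{p^{s+1}}/K}$, and the terminal slope $1/(p^s(p-1))$ of the latter bears no relation to $e_{L_{s+1}/K_{p^{s+1}}}$. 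So your displayed conclusion $\mu_{L/K}\le\max\{s+1,\phi_{K_{p^{s+1}}/K}(pi)\}$, while correct, is not established by the argument you give.

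The fix is simpler than your workaround: the multiplier in Proposition~\ref{prop:PmAndRamification} is the absolute ramification index of the \emph{base} $E$ (here $E=K_{p^{s+1}}$, so $e_E=p^s(p-1)$), not $e_{F/E}$; the paper's notation ``$e_{F/E}$'' is evidently a slip, as its own application of the proposition makes clear. With that reading one obtains directly
\[
\mu_{L_{s+1}/K_{p^{s+1}}}\;\le\; p^s(p-1)\cdot\frac{a}{p^s}\;=\;(p-1)a\;=\;ip,
\]
no uncontrolled factor ever appears, and the rest of your argument --- including the final case split on $pi$ versus $p^\alpha$ --- goes through unchanged.
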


\begin{proof}
Set $s=\alpha$ so that $p^s>ip/(p-1)=a$. 
First we estimate $\mu_{L_{s}/K}.$ By Lemma~\ref{lem:MuInTowers}, we have
$$\mu_{L_{s}/K}=\mathrm{max}\{\mu_{K_{s}/K}, \phi_{K_{s}/K}(\mu_{L_{s}/K_{s}})\}.$$
Propositions~\ref{prop:ProofOfPm} and \ref{prop:PmAndRamification} show that $\mu_{L_{s}/K_{s}} \leq p^s(p-1)m=ip$. A classical computation (e.g. \cite[\S IV]{SerreLocalFields}) shows that $\mu_{K_{s}/K}=s+1,$ and that $\phi_{K_{s}/K}(t)$ has the last break point given by $\phi_{K_{s}/K}(p^s)=s+1$, with last slope $1/(p^s(p-1))$. Therefore, we may estimate $$\phi_{K_s/K}(t)\leq 1+s-\frac{1}{p-1}+\frac{t}{p^s(p-1)},$$ and we obtain
$$\mu_{L_{s}/K}\leq \mathrm{max}\{1+s, 1+s-\frac{1}{p-1}+\frac{ip}{p^s(p-1)}\}=1+s+\mathrm{max}\{0, \frac{ip}{p^s(p-1)}-\frac{1}{p-1}\}.$$
Finally, observing that $\mu_{L/K}\leq \mu_{L_{s}/K}$, we obtain the desired bound.
\end{proof}

\begin{rem}\label{rem:CrystallineVsSemistable}
Let us briefly compare the bound from Theorem~\ref{thm:conclusion} with the results \cite{Hattori, CarusoLiu} (i.e., the semistable case) and \cite{Ram1}. For general comparison between the three, see \cite[\S~5.2]{Ram1}. Here we only summarize that for $K$ absolutely unramified, the bounds of \cite{CarusoLiu, Ram1} both become 
\begin{equation}\label{CommonSemistableBound}
\mu_{L/K} \leq 1+\alpha+\mathrm{max}\left\{\frac{ip}{p^{\alpha}(p-1)}-\frac{1}{p^\alpha}, \frac{1}{p-1}\right\},
\end{equation}
(with the same $\alpha$ as in Theorem~\ref{thm:conclusion}). In order to compare with \cite{Hattori}, one needs to further assume $i<p-1$, in which case all three bounds agree. 

On the other hand, Theorem~\ref{thm:conclusion} gives 
\begin{equation}
\mu_{L/K} \leq 1+\alpha+\mathrm{max}\left\{\frac{ip}{p^{\alpha}(p-1)}-\frac{1}{p-1}, \; 0 \,\right\},
\end{equation}
which is a stronger bound in all cases (since $\alpha$ is always at least $1$).  
\end{rem}

\begin{pr}
To show that Theorem~\ref{thm:conclusion} in general excludes torsion \textit{semistable} representations, we consider the example from \cite{Hattori} for which the ramification bounds of \textit{loc. cit.} are optimal. 
Let $K=\QQ_p$, and consider the Tate curve $E_p$ at $p$, i.e., the elliptic curve over $\QQ_p$ with $E_P(\overline{\QQ_p})=\overline{\QQ_p}^{\times}/p^{\ZZ}$. It is well--known that $E_p$ has semistable (and not good) reduction. 

The set $E_p(\overline{\QQ_p})[p]$ can be identified with the set of elements of the form $\zeta_p^i p^{j/p}$ where $0\leq i,j \leq p-1.$ Consequently, the splitting field for $\H^1_{\et}(E_{p, \CC_K}, \ZZ/p\ZZ)$ is $L=\QQ_p(\zeta_p, p^{1/p})$. By \cite[Remark~5.5]{Hattori}, one has $\mu_{L/K}=2+1/(p-1)$. 

On the other hand, the bound from Theorem~\ref{thm:conclusion} for $i=1$ would give the stronger estimate $\mu_{L/K}\leq 2$. This shows that the bounds obtained in Theorem~\ref{thm:conclusion} are ``genuinely crystalline'', i.e. not satisfied by varieties with semistable reduction in general.
\end{pr}

\bibliography{references}
\bibliographystyle{amsalpha}.
\end{document}